\newtheorem{theorem}{Theorem}
\newtheorem{corollary}[theorem]{Corollary}
\newtheorem{proposition}[theorem]{Proposition}
\newenvironment{proof}{\begin{trivlist}
    \item[\hskip\labelsep{\bf Proof.}]}{$\hfill\Box$\end{trivlist}}
\theoremstyle{plain} \theorembodyfont{\rmfamily}
\newtheorem{remark}[theorem]{Remark}}
\theoremstyle{plain} \theorembodyfont{\rmfamily}
\newtheorem{example}[theorem]{Example}}
\newcommand{\bsgamma}{{\boldsymbol{\gamma}}}
\newcommand{\bsh}{{\boldsymbol{h}}}
\newcommand{\bst}{{\boldsymbol{t}}}
\newcommand{\bsx}{{\boldsymbol{x}}}
\newcommand{\bsy}{{\boldsymbol{y}}}
\newcommand{\bsz}{{\boldsymbol{z}}}
\newcommand{\bszero}{{\boldsymbol{0}}}
\newcommand{\rd}{\,\mathrm{d}}
\newcommand{\bbR}{\mathbb{R}}
\newcommand{\bbN}{\mathbb{N}}
\newcommand{\calI}{\mathcal{I}}
\newcommand{\mask}[1]{}
\newcommand{\esup}{\operatornamewithlimits{ess\,sup}}
\newcommand{\e}{{\varepsilon}}
\newcommand{\setu}{{\mathfrak{u}}}
\newcommand{\setv}{{\mathfrak{v}}}
\newcommand{\setU}{{\mathfrak{U}}}
\newcommand{\norm}[1]{\left\Vert#1\right\Vert}
\newcommand{\abs}[1]{\left\vert#1\right\vert}
\newcommand{\sset}{[s]}
\title{
Very Low Truncation Dimension for High Dimensional Integration Under
Modest Error Demand}
\author{Peter Kritzer\thanks{P. Kritzer is supported by the Austrian
Science Fund (FWF):
Project F5506-N26, which is a part of the Special Research Program
"Quasi-Monte Carlo Methods:
Theory and Applications".} ,
Friedrich Pillichshammer\thanks{F. Pillichshammer is
partially supported by the Austrian Science Fund (FWF): Project F5509-N26,
which is a part of the Special Research Program "Quasi-Monte Carlo Methods:
Theory and Applications".}\;, and G. W. Wasilkowski}
\date{}
\begin{document}
\maketitle

\centerline{\it Dedicated to the memory of Joseph F. Traub (1932-2015)}

\begin{abstract}
We consider the problem of numerical integration for
weighted anchored and ANOVA Sobolev spaces of $s$-variate functions.
Here $s$ is large including $s=\infty$. Under the assumption of
sufficiently fast decaying weights, we prove in a constructive way that
such integrals can be approximated by quadratures for functions $f_k$
with only $k$ variables, where $k=k(\e)$ depends solely
on the error demand $\e$ and is surprisingly small when $s$ is
sufficiently large relative to $\e$.
This holds, in particular, for $s=\infty$ and arbitrary
$\e$ since then $k(\e)<\infty$ for all $\e$.
Moreover $k(\e)$ does not depend on the function being integrated, i.e.,
is the same for all functions from the unit ball of the space.
\end{abstract}

\centerline{\begin{minipage}[hc]{130mm}{
{\em Keywords:} numerical integration, weighted anchored and ANOVA
Sobolev spaces, truncation dimension  \\
{\em MSC 2000:}  65D30, 65D32}
\end{minipage}}

\section{Introduction}
This paper has been inspired by \cite{DKLP15}. There the classical
multivariate integration problem of approximating
\[
\calI_s(f)\,=\,\int_{[0,1]^s} f(\bsx)\rd\bsx\quad\mbox{for large\ }s
\]
was considered for functions from $\bsgamma$-weighted Sobolev spaces
of functions with mixed derivatives of order one bounded in $L_2$ norm.
Such spaces are Hilbert spaces and have been assumed in a number of papers
dealing with multivariate integration. In particular, there is a number of
papers, see, e.g., \cite{kor,kuo,NC06,CoolNuy,SKJ,SR}, initiated by
the work in \cite{kor}, that study
{\em Component-By-Component} ({\em CBC}, for short) methods of constructing
efficient lattice rules for approximating $\calI_s(f)$. The authors of
\cite{DKLP15} proved that for rapidly decreasing weights, one can
decrease the cost of fast CBC by restricting the search space
for variables with smallest weights;
however still dealing with $s$-variate integrals.

This observation motivated us to consider efficient dimension truncation
and compare it to the technique from \cite{DKLP15}. Dimension
truncation has been considered in a number of papers; however,
mainly for specific integrands. Our approach is in the {\em worst case setting}
spirit, i.e., we study truncation that depends only on the error
demand $\e$ and global properties of the Banach space of integrands.
We consider more general weighted integration problems defined on more general
classes of functions, and the proposed truncation technique does not depend
on specific algorithms. Moreover, for some classes of weights,
when the corresponding integration problem is well defined, see, e.g.,
\cite{GnHeHiRiWa15,HeRiWa15,SH,Was14}, $s$ could be infinite.

More precisely, we consider approximating
\[
  \calI_s(f)\,=\,\int_{D^s} f(\bsx)\,\rho_s(\bsx)\rd\bsx,
\]
where $D$ can be an arbitrary (bounded or unbounded) interval, $\rho$ is
a probability density function on $D$, and
\[
\rho_s(\bsx)\,:=\,\prod_{j=1}^s\rho(x_j)\ \ \ \mbox{ for $\bsx=(x_1,\ldots,x_s)$}.
\]
As for the spaces of integrands $f$, we consider both {\em anchored}
and {\em ANOVA} spaces (denoted respectively by $F_{s,p,\bsgamma}$
and $H_{s,p,\bsgamma}$) of functions with mixed first order derivatives
bounded in $\psi$-weighted $L_p$ norm. These are Banach (as opposed to
Hilbert) spaces and were considered in a number of papers including
\cite{GnHeHiRiWa15,Was14}. Their definitions and properties are recalled
in Section 2. Here we briefly show the norm for the anchored case
\[
\|f\|_{F_{s,p,\bsgamma}}\,=\,
\left(\sum_\setu \gamma_\setu^{-p}\,\int_{D^{|\setu|}}
  |f^{(\setu)}([\bsx_\setu;\bszero_{-\setu}])|^p\prod_{j\in\setu}
  (\psi(x_j)\rd x_j)\right)^{1/p}.
\]
Here $\psi$ is a positive (a.e., on $D$) probability density function,
the summation is with respect to the subsets $\setu$ of
$[s]=\{1,\dots,s\}$ (when $s=\infty$ the summation
is with respect to all finite subsets of $\bbN$),
and $f^{(\setu)}([\bsx_\setu;\bszero_{-\setu}])$ denotes the
mixed partial derivatives $\prod_{j\in\setu}\frac{\partial}{\partial x_j}$
of $f$ with values of $x_j$ for $j\notin\setu$ being zero. Finally,
$\gamma_\setu$ are non-negative real
numbers that quantify the importance of sets
$\bsx_\setu=(x_j)_{j\in\setu}$ of variables.

We show that for the anchored spaces with large $s$,
if the weights decay sufficiently fast then it is possible to approximate
the original $s$-variate integral $\calI_s(f)$ by the following
$k$-variate integral
\[
\calI_k(f_k),\quad\mbox{where}\quad  f_k(x_1,\dots,x_k)\,:=\,
f(x_1,\dots,x_k,0,\dots,0).
\]
This is because for a modest error demand $\e$, one can compute $k=k(\e)$
such that
\begin{equation}\label{important}
k(\e)\,\ll\,s\quad\mbox{and}\quad
|\calI_s(f)-\calI_{k(\e)}(f_{k(\e)})|\,\le\,\frac{\e}{2^{1-1/p}}\,
\|f\|_{F_{s,p,\bsgamma}}\quad\mbox{for all\ }f\,\in\,F_{s,p,\bsgamma}.
\end{equation}

Actually, the truncation error in \eqref{important} is slightly smaller
\[
|\calI_s(f)-\calI_k(f_k)|\,\le\,\frac{\e}{2^{1-1/p}}\,
\|f-f_k\|_{F_{s,p,\bsgamma}}.
\]
Therefore for any $k$-variate rule $A_k$ for approximating integrals from
the space $F_{k,p,\bsgamma}$ with the worst case error bounded by
\[
  e(A_k;F_{k,p,\bsgamma})\,\le\,\frac{\e}{2^{1-1/p}},
\]
the resulting $s$-variate quadrature
$Q_{s,p,\bsgamma}^{\rm trnc}$, defined by
\[
Q_{s,p,\bsgamma}^{\rm trnc}(f)\,:=\,A_k(f_k),
\]
has its worst case error bounded by
\[
e(Q_{s,p,\bsgamma}^{\rm trnc};F_{s,p,\bsgamma})\,\le\,\e.
\]

Since for modest values of $\e$ the truncation dimension $k(\e)$
is small, the approach suggested in the current paper
could lead to very efficient ways of dealing with integrals that have
a huge (including $\infty$) number of variables.
We illustrate this for the classical integration problem
($D=[0,1]$ and $\psi=\rho\equiv1$) and special product weights
\[
\gamma_\setu\,=\,\prod_{j\in\setu}j^{-a}\quad\mbox{with}\quad a>1.
\]
(For such weights the space $F_{s,p,\bsgamma}$ and the integration problem
are well defined even for $s=\infty$, see, e.g., \cite{Was14}.)
Then
\[
 k(\e)\,=\,O\left(\e^{-1/(a-1+1/p)}\right).
\]
For instance for $p=2$ and $a=2$, some values of $k(\e)$ are listed below
\[
\begin{array}{c||c|c|c|c}
  \e& 10^{-1}& 10^{-2}& 10^{-3} &10^{-4}\\
  \hline
  k(\e)&4 & 17      & 77     &357
  \end{array},
\]
and they are surprisingly small for modest error demand $\e$.
They are even smaller for $p=1$:
\[
\begin{array}{c||c|c|c|c}
  \e& 10^{-1}& 10^{-2}& 10^{-3} &10^{-4}\\
  \hline
  k(\e)&3    & 9      & 31      & 99
  \end{array}.
\]
More values of $k(\e)$ are provided in Example \ref{ex:1} and it is
clear that the larger $a$ or the smaller $p$ the smaller values of
$k(\e)$.

We also show there that for the {\em normalized worst case error},
the corresponding values, denoted by $k^{\rm nrm}(\e)$, are even
smaller. For instance for $p=2=a$, we have
\[
\begin{array}{c||c|c|c|c}
\e &  10^{-1}  &  10^{-2}  & 10^{-3}  & 10^{-4}\\
\hline
k^{\rm nrm}(\e)& 3      &  15      &  69      & 322
\end{array}.
\]

These results depend very much on special properties of anchored spaces and
do not hold in general for ANOVA spaces with arbitrary weights. However,
for product weights
\[
\gamma_\setu\,=\,\prod_{j\in\setu}\gamma_j
\]
we can use the following fact due to \cite{GnHeHiRiWa15}.
The corresponding anchored and ANOVA spaces are equal
(as sets of functions)  and their norms
are equivalent with the equivalence constant bounded by
\[
   \prod_{j=1}^s(1+\gamma_j\,\kappa)
\]
for a number $\kappa$ that depends on $p$ and $\psi$.
(Note that if $\sum_{j=1}^\infty\gamma_j<\infty$ then the equivalence
hold also for $s=\infty$.)
Of course, when this constant is not too large, efficient algorithms for
the integrands from $F_{s,p,\bsgamma}$ are also efficient for
integrands from $H_{s,p,\bsgamma}$.

To simplify the presentation, from Sections 2 to 6, we deal with the
classical integration problem and finite $s$.
Basic concepts are presented in Section 2. The results on the
dimension truncation for anchored spaces are in Section 3.
In Section 4, we apply recent results, see \cite{HeRi13,HeRiWa15,SH},
on the equivalence of anchored and ANOVA spaces for product weights so that the
truncation technique from Section 3 can be used for ANOVA spaces.
Moreover, we improve the result of \cite{HeRi13} by
providing in Theorem \ref{thm:p2} the exact value of the embedding
operator for $p=2$ from the anchored onto ANOVA space.
We next use the above truncation results in
Sections 5 and 6 to derive a more efficient fast CBC algorithm.
Generalizations of results from Sections 3 and 4 are presented briefly
in Section 7; they rely on the same proof techniques.

Finally we want to add that the worst case approach to the effective
dimension in both the truncated and superposition sense is considered
in \cite{owen14}; however, only for weighted Hilbert spaces ($p=2$)
of periodic integrands with $D=[0,1]^s$.  Moreover, the effective
dimension is defined in terms of
variances of the components from (classical) ANOVA decomposition
of functions.
Although the results of \cite{owen14} are very interesting from
theoretical point of view, they are
not explicitly related to the errors of algorithms.

\section{Basic Concepts}

\subsection{Anchored and Unanchored Spaces}\label{defspace}

In this section, we introduce the basic definitions of the anchored and
unanchored (ANOVA) Sobolev spaces of $s$-variate functions. More
detailed information can be found in \cite{HeRiWa15,SH,Was14}.

Here we follow \cite[Section~2]{Was14}: For $p \in [1,\infty]$
let $F=W_{p,0}^1([0,1])$ be the space of functions defined on $[0,1]$
that vanish at zero, are absolutely continuous, and have bounded
derivative in the $L_p$ norm. We endow $F$ with the norm
$\|f\|_F=\|f'\|_{L_p}$ for $f \in F$.

For $s \in \mathbb{N}$ and
\[
  \sset\,:=\,\{1,2,\dots,s\},
\]
we will use $\setv,\setu$ to denote
subsets of $\sset$, i.e.,
\[
  \setv,\setu\subseteq\,\sset.
\]
Moreover, for $\bsx=(x_1,x_2,\ldots,x_s)\in[0,1]^s$ and
$\setu\subseteq\sset$, $[\bsx_\setu;\bszero_{-\setu}]$ denotes the
$s$-dimensional vector with all $x_j$ for $j\notin\setu$ replaced
by zero, i.e.,
\[
  [\bsx_\setu;\bszero_{-\setu}]\,=\,(y_1,y_2,\dots,y_s)\quad
   \mbox{with}\quad  y_j\,=\,\left\{\begin{array}{ll} x_j &
    \mbox{if\ }j\in\setu,\\ 0 & \mbox{if\ } j\notin\setu.\end{array}
    \right.
\]
We also write $\bsx_\setu$ to denote the $|\setu|$-dimensional
vector $(x_j)_{j\in\setu}$ and
\[
  f^{(\setu)}\,=\,\frac{\partial^{|\setu|} f}{\partial \bsx_{\setu}}\,
=\,\prod_{j\in\setu}\frac{\partial}{\partial x_j}\,f
  \quad\mbox{with}\quad f^{(\emptyset)}=f.
\]

For $s \in \mathbb{N}$ and nonempty $\setu \subseteq [s]$ let $F_{\setu}$
be the completion of the space spanned by
$f(\bsx)=\prod_{j \in \setu} f_j(x_j)$ for $f_j \in F$
and $\bsx=(x_1,\ldots,x_s) \in [0,1]^s$, with the norm
$$\|f\|_{F_{\setu}}=\|f^{(\setu)}\|_{L_p}.$$
Note that $F_{\setu}$ is a
space of functions with domain $[0,1]^s$ that depend only on the
variables listed in
$\setu$. For $\setu=\emptyset$, let $F_{\setu}$ be the
space of constant functions with the natural norm.

Consider next a sequence $\bsgamma=(\gamma_\setu)_{\setu\subseteq[s]}$
of non-negative real numbers, called {\em weights}. Since some
weights could be zero, we will use
\[
  \setU\,=\,\{\setu\subseteq\sset\,:\,\gamma_\setu>0\}
\]
to denote the collection of positive weights. For $p\in[1,\infty]$,
we define
\[
F_{s,p}= {\rm span} \left(\bigcup_{\setu \in \setU} F_{\setu}\right).
\]
The corresponding weighted {\em anchored} space $F_{s,p,\bsgamma}$
is the completion of $F_{s,p}$ with respect to the norm
\[
  \|f\|_{F_{s,p,\bsgamma}}\,=\,\left\{
\begin{array}{ll}
 \left(\sum_{\setu\subseteq\sset}\frac{1}{\gamma_\setu^p}\,
    \|f^{(\setu)}([\cdot_\setu;\bszero_{-\setu}])\|_{L_p}^p
   \right)^{1/p} & \mbox{ if $p < \infty$,}\\[0.8em]
 \max_{\setu\in [s]}\frac1{\gamma_\setu}\,
   \esup_{\bsx_\setu\in[0,1]^{|\setu|}}
   \left|f^{(\setu)}([\bsx_\setu;\bszero_{-\setu}])\right| &
   \mbox{ if $p = \infty$.}
\end{array}\right.
\]
For $\gamma_\setu=0$, the corresponding term
$f^{(\setu)}([\cdot_\setu;\bszero_{-\setu}])\equiv0$.
We then have
\[
 \|f\|_{F_{s,p,\bsgamma}}\,=\,\left(\sum_{\setu\in\setU}\frac{1}{\gamma_\setu^p}\,
    \|f^{(\setu)}([\cdot_\setu;\bszero_{-\setu}])\|_{L_p}^p
   \right)^{1/p}.
\]
For $p=\infty$ the norm reduces to
\[
   \|f\|_{F_{s,\infty,\bsgamma}}\,=\,\max_{\setu\in\setU}\frac1{\gamma_\setu}\,
   \esup_{\bsx_\setu\in[0,1]^{|\setu|}}
   \left|f^{(\setu)}([\bsx_\setu;\bszero_{-\setu}])\right|.
\]
More information on the structure of the space $F_{s,p,\bsgamma}$ can
be found in \cite[Section~2]{Was14}.

An important class of weights is provided by product weights
\[
  \gamma_\setu\,=\,\prod_{j\in\setu}\gamma_j
\]
for positive reals $\gamma_j$. When dealing with them, we will assume
without any loss of generality that
\[
  1\ge \gamma_1,\quad\mbox{and}\quad
  \gamma_j\,\ge\,\gamma_{j+1}\,>\,0\quad\mbox{for all\ }j.
\]
Note that for product weights we have
$\setU=2^{\sset}=\left\{\setu: \setu\subseteq\sset\right\}$.

For $p=2$, $F_{s,2,\bsgamma}$ is a reproducing kernel Hilbert
space with kernel
\[
  K'(\bsx,\bsy)\,=\,\sum_{\setu\in\setU}\gamma_\setu^{2}\prod_{j\in\setu}
  \min(x_j,y_j),
\]
for $\bsx=(x_1,\ldots,x_s)$ and analogously for $\bsy$, which for
product weights reduces to
\[
  K'(\bsx,\bsy)\,=\,\prod_{j=1}^s\left(1+\gamma_j^2\,\min(x_j,y_j)\right).
\]

The weighted {\em unanchored} (or {\em ANOVA}) Sobolev space
$H_{s,p,\bsgamma}$ is the Banach space of continuous functions
$f:[0,1]^s \to\bbR$ with finite norm
\[
  \|f\|_{H_{s,p,\bsgamma}}=\left(\sum_{\setu\in\setU}\frac{1}{\gamma_{\setu}^p}
   \,\left\|\int_{[0,1]^{s-|\setu|}}
     f^{(\setu)}([\cdot_\setu;\bsx_{-\setu}])
     \rd \bsx_{-\setu}\right\|^p_{L_p}\right)^{1/p}.
\]
For $p=\infty$ the norm reduces to
\[
  \|f\|_{H_{s,\infty,\bsgamma}}=\max_{\setu\in\setU}\frac1{\gamma_\setu}\,
   \esup_{\bsx_\setu\in
   [0,1]^{|\setu|}}\left|\int_{[0,1]^{s-|\setu|}}
     f^{(\setu)}([\bsx_\setu;\bsx_{-\setu}])
     \rd \bsx_{-\setu}\right|.
\]
For $p=2$ and product weights,
$H_{s,2,\bsgamma}$ is a reproducing kernel Hilbert space with
kernel function given by
\[
  K(\bsx,\bsy)\,=\, \prod_{j=1}^s K_{\gamma_j}(x_j, y_j)
  \,=\, \prod_{j=1}^s (1 + \gamma_j^2(\tfrac{1}{2}B_2(\{x_j-y_j\})
  + (x_j-\tfrac{1}{2})(y_j-\tfrac{1}{2}))),
\]
where $B_2(x) = x^2 - x + \tfrac{1}{6}$ is the second Bernoulli
polynomial and $\{x\} = x - \lfloor x \rfloor$.

\subsection{Algorithms and  Errors}\label{secalgdef}
We consider algorithms that use a finite number $n$ of samples
$f(\bsx_i)$. Without loss of generality, see, e.g., \cite{TWW88},
we can restrict the attention to linear algorithms, called
quadratures,
\[
  Q_{s,n}(f)\,=\,\sum_{i=1}^n a_i\,f(\bsx_i)
\]
for $a_i\in\bbR$ and $\bsx_i\in[0,1]^s$. An important class of
quadratures is provided by {\em quasi-Monte Carlo} methods with
all coefficients $a_i=1/n$ (see, e.g., \cite{DKS, DP10, LP14, niesiam}).

We consider in this paper the {\em worst case} error defined by
\[
   e(Q_{s,n};F_{s,p,\bsgamma})\,=\,\|\calI_s-Q_{s,n}\|\,=\,
  \sup_{\|f\|_{F_{s,p,\bsgamma}}\le1}|\calI_s(f)-Q_{s,n}(f)|.
\]
It is well known that the operator norm
$\|\calI_s\|=\sup_{\|f\|_{F_{s,p,\bsgamma}}\le 1}|\calI_s(f)|$ of
$\calI_s$ is equal to
\begin{equation}\label{I-nrm}
   \|\calI_s\|
  \,=\,\left\{\begin{array}{ll}
  \left(\sum_{\setu\in\setU}\frac{\gamma_\setu^{p^*}}
   {(p^*+1)^{|\setu|}}\right)^{1/p^*} & \mbox{for\ }p>1,\\
  \max_{\setu\in\setU}\gamma_\setu & \mbox{for\ }p=1, \end{array}\right.
\end{equation}
see, e.g., \cite{Was14}. Here $p^*$ is the conjugate of $p$,
i.e.,
\[
  \frac1p+\frac1{p^*}\,=\,1.
\]

In the case of product weights the formula \eqref{I-nrm} can be
rewritten to
\begin{equation*}\label{I-nrm-prod}
   \|\calI_s\|
  \,=\,\left\{\begin{array}{ll}
  \prod_{j=1}^s\left(1+\frac{\gamma_j^{p^*}}{p^* +1} \right)^{1/p^*}
& \mbox{for\ }p>1.\\
  \max_{\setu\subseteq[s]} \prod_{j\in\setu}\gamma_j & \mbox{for\
  }p=1,
\end{array}\right.
\end{equation*}
Of course, since we assumed that all $\gamma_j\le1$ then the maximum
above is attained by $\setu=\emptyset$ and is in fact equal to 1.

The definitions of the errors for the space $H_{s,p,\bsgamma}$ are
similar. The norm of the integration operator with respect to the
space $H_{s,p,\bsgamma}$ is also equal to the right hand side
of~\eqref{I-nrm}.

\section{Anchored Decomposition and Truncation}
It is well known, see, e.g., \cite{KSWW10a}, that any $f\in
F_{s,p,\bsgamma}$ has the unique {\em anchored decomposition}
\begin{equation}\label{dec-a}
  f\,=\,\sum_{\setu\in\setU} f_\setu,
\end{equation}
where $f_\setu$ is an element of $F_\setu$,
depends only on $x_j$ for $j\in\setu$, and
\begin{equation}\label{anc}
   f_\setu(\bsx)\,=\,0\quad\mbox{if\ }x_j=0\mbox{\ for some\ }j\in\setu.
\end{equation}
For the empty set $\setu$, $f_\emptyset$ is a constant function.
We stress that in general we do not know what the elements
$f_\setu$ are and we can only evaluate the original function $f$.

The anchored  decomposition has the following important
properties, see, e.g., \cite{HeRiWa15}:
\begin{equation}\label{der}
   f^{(\setu)}([\cdot_\setu;\bszero_{-\setu}])\,\equiv\,f_{\setu}^{(\setu)}.
\end{equation}
Due to \eqref{anc} we have
\[
f_\setu\,\equiv\,0\quad\mbox{iff}\quad
   f^{(\setu)}([\cdot_\setu;\bszero_{-\setu}])\,\equiv\,0,
\]
and due to \eqref{dec-a} and \eqref{der}
\[
   \|f\|_{F_{s,p,\bsgamma}}\, = \, \left\| \sum_{\setu\in\setU}
     f_\setu \right\|_{F_{s,p,\bsgamma}} \,=\,
  \left(\sum_{\setu\in\setU}\gamma_\setu^{-p}\,
    \|f_\setu^{(\setu)}\|_{L_p}^p\right)^{1/p}\quad\mbox{for\ }
    p<\infty
\]
and
\[
  \|f\|_{F_{s,\infty,\bsgamma}}\,=\,\max_{\setu\in\setU}
   \frac{\|f_\setu^{(\setu)}\|_{L_\infty}}{\gamma_\setu}
   \quad\mbox{for\ }p=\infty.
\]
For any $\setu\not=\emptyset$, there exists (unique in $L_p$-sense)
$g \in L_p([0,1]^{|\setu|})$ such that
\[
  f_\setu(\bsx)\,=\,\int_{[0,1]^{|\setu|}}g(\bst)\,\prod_{j\in\setu}1_{[0,x_j)}(t_j)
  \rd \bst\quad\mbox{and}\quad
  f_\setu^{(\setu)}\,=\,g,
\]
where $1_J(t)$ is the characteristic function of the set $J$,
i.e., $1_J(t)=1$ if $t \in J$ and 0 otherwise.

Moreover, for any $\setu$,
\[
  f([\cdot_\setu;\bszero_{-\setu}])\,=\,\sum_{\setv\subseteq\setu}f_\setv.
\]
In particular, for $k<s$ we have
\begin{equation}\label{trnk}
  f([\bsx_{[k]};\bszero_{-[k]}])\,=\,f(x_1,\dots,x_k,0,\dots,0)
  \,=\,\sum_{\setv\subseteq[k]}f_\setv(\bsx)
\end{equation}
which allows us to compute samples  and approximate the integral
of the {\em truncated} function
\[
  f_k(x_1,\dots,x_k)\,=\,\sum_{\setu\subseteq[k]}f_\setu(\bsx).
\]
Moreover, $f_k\in F_{k,p,\bsgamma}\subset F_{s,p,\bsgamma}$ and
\[
\|f_k\|_{F_{k,p,\bsgamma}}\,=\,\|f([\cdot_{[k]};\bszero_{-[k]}])\|_{F_{s,p,\bsgamma}}\,=\,
\left\|\sum_{\setu\subseteq[k]}f_\setu
   \right\|_{F_{s,p,\bsgamma}}.
\]

For given $k\in\sset$, let $A_{k,n}$ ($n\in\bbN$) be a family of
algorithms to approximate integrals
\[
  \calI_k(g)\,=\,\int_{[0,1]^k}g(\bsx)\rd \bsx
\]
for functions from the space $F_{k,p,\bsgamma}$. We use them to
define the following quadratures for the original space
$F_{s,p,\bsgamma}$
\begin{equation}\label{trnk-q}
  Q^{\rm trnc}_{s,n,k}(f)\,=\,A_{k,n}(f([\cdot_{[k]};\bszero_{-[k]}]))
  \,=\,A_{k,n}(f_k).
\end{equation}
Clearly, the quadratures $Q^{\rm trnc}_{s,n,k}$ are well defined.

We have the following result.

\begin{theorem}\label{thm-main}
For every $k \in \sset$ the worst case error of $Q_{s,n,k}^{\rm
trnc}$ is bounded by
\[
  e(Q^{\rm trnc}_{s,n,k};F_{s,p,\bsgamma})\,\le\,
  \left( [e(A_{k,n};F_{k,p,\bsgamma})]^{p^*}  + \sum_{\setu\not\subseteq[k]}
\frac{\gamma_\setu^{p^*}\,}{(p^*+1)^{|\setu|}} \right)^{1/p^*}
\quad\mbox{for\ }p>1
\]
and by
\[
 e(Q^{\rm trnc}_{s,n,k};F_{s,1,\bsgamma})\,\le\,
   \max\left(e(A_{k,n};F_{k,1,\bsgamma})\,,\,
     \max_{\setu\not\subseteq[k]}\gamma_\setu\right) \quad\mbox{for\ }p=1,
\]
where in the case $k=s$ we set
$\max_{\setu\not\subseteq\sset}\gamma_\setu:=0$.
\end{theorem}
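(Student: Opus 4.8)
The plan is to split the error of $Q^{\rm trnc}_{s,n,k}$ into a pure \emph{truncation} error and a pure \emph{algorithmic} error, to bound each separately, and then to recombine the two bounds sharply via H\"older's inequality for two summands, exploiting that the $p$-th power of the anchored norm is additive over the anchored pieces.

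Fix $f\in F_{s,p,\bsgamma}$ and write $f_k=f([\cdot_{[k]};\bszero_{-[k]}])$. Since $Q^{\rm trnc}_{s,n,k}(f)=A_{k,n}(f_k)$ and $f_k$ depends only on the first $k$ variables, so that $\calI_s(f_k)=\calI_k(f_k)$ in the classical case, we have
\[
  \calI_s(f)-Q^{\rm trnc}_{s,n,k}(f)
  \,=\,\bigl(\calI_s(f)-\calI_k(f_k)\bigr)+\bigl(\calI_k(f_k)-A_{k,n}(f_k)\bigr).
\]
By \eqref{dec-a} and \eqref{trnk} one has $f-f_k=\sum_{\setu\not\subseteq[k]}f_\setu$, and by uniqueness this is the anchored decomposition of $f-f_k$. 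Introducing the truncated weights $\bsgamma^{(k)}$ given by $\gamma^{(k)}_\setu=\gamma_\setu$ if $\setu\not\subseteq[k]$ and $\gamma^{(k)}_\setu=0$ otherwise, we get $f-f_k\in F_{s,p,\bsgamma^{(k)}}$ with $\|f-f_k\|_{F_{s,p,\bsgamma^{(k)}}}=\|f-f_k\|_{F_{s,p,\bsgamma}}$; applying \eqref{I-nrm} to the space $F_{s,p,\bsgamma^{(k)}}$ then gives
\[
  \abs{\calI_s(f)-\calI_k(f_k)}\,=\,\abs{\calI_s(f-f_k)}\,\le\,
  \left(\sum_{\setu\not\subseteq[k]}\frac{\gamma_\setu^{p^*}}{(p^*+1)^{|\setu|}}\right)^{1/p^*}\|f-f_k\|_{F_{s,p,\bsgamma}}
\]
for $p>1$, and the analogous bound, with the $\ell^{p^*}$-sum replaced by $\max_{\setu\not\subseteq[k]}\gamma_\setu$, for $p=1$. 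For the second term, the definition of the worst case error yields $\abs{\calI_k(f_k)-A_{k,n}(f_k)}\le e(A_{k,n};F_{k,p,\bsgamma})\,\|f_k\|_{F_{k,p,\bsgamma}}$.

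The recombination is the crux. Since the anchored decomposition of $f$ is the disjoint union of that of $f-f_k$ (the sets $\setu\not\subseteq[k]$) and that of $f_k$ (the sets $\setu\subseteq[k]$), the formula $\|f\|_{F_{s,p,\bsgamma}}^p=\sum_{\setu\in\setU}\gamma_\setu^{-p}\|f_\setu^{(\setu)}\|_{L_p}^p$ splits as
\[
  \|f\|_{F_{s,p,\bsgamma}}^p\,=\,\|f-f_k\|_{F_{s,p,\bsgamma}}^p+\|f_k\|_{F_{k,p,\bsgamma}}^p\qquad(p<\infty).
\]
Writing $a=\|f-f_k\|_{F_{s,p,\bsgamma}}$, $b=\|f_k\|_{F_{k,p,\bsgamma}}$, and letting $C_1$ and $e_A=e(A_{k,n};F_{k,p,\bsgamma})$ be the two constants above, the two estimates combine to $\abs{\calI_s(f)-Q^{\rm trnc}_{s,n,k}(f)}\le C_1a+e_Ab$, and H\"older's inequality for two terms with exponents $p$ and $p^*$ gives
\[
  C_1a+e_Ab\,\le\,\left(C_1^{p^*}+e_A^{p^*}\right)^{1/p^*}\left(a^p+b^p\right)^{1/p}
  \,=\,\left(C_1^{p^*}+e_A^{p^*}\right)^{1/p^*}\|f\|_{F_{s,p,\bsgamma}}.
\]
Taking the supremum over $\|f\|_{F_{s,p,\bsgamma}}\le1$ proves the bound for $p>1$. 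For $p=1$ one instead uses $a+b=\|f\|_{F_{s,1,\bsgamma}}$ (now the norm itself, not its power, is additive) together with $C_1a+e_Ab\le\max(C_1,e_A)(a+b)$, where $C_1=\max_{\setu\not\subseteq[k]}\gamma_\setu$; when $k=s$ the index set $\{\setu\not\subseteq[s]\}$ is empty, so the sum and the maximum both vanish, which matches the stated convention.

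I expect the main obstacle to be exactly this recombination: estimating the truncation and algorithmic parts separately by $\|f\|_{F_{s,p,\bsgamma}}$ and adding would only yield the weaker constant $C_1+e_A$ (and, for $p=1$, $C_1+e_A$ rather than $\max(C_1,e_A)$), whereas the $\ell^{p^*}$-type combination in the theorem requires keeping the finer quantities $a$ and $b$, using the exact identity $a^p+b^p=\|f\|_{F_{s,p,\bsgamma}}^p$, and only then invoking H\"older on two summands. A secondary point to verify carefully is that \eqref{I-nrm}, stated for $F_{s,p,\bsgamma}$, transfers verbatim to the truncated weights $\bsgamma^{(k)}$, so that $\abs{\calI_s(f-f_k)}$ is controlled by $\|f-f_k\|_{F_{s,p,\bsgamma}}$ and not merely by $\|f\|_{F_{s,p,\bsgamma}}$.
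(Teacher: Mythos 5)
Your proposal is correct and follows essentially the same route as the paper: split off the truncation error $\calI_s(f-f_k)$, bound it and the algorithmic error by $\|f-f_k\|_{F_{s,p,\bsgamma}}$ and $\|f_k\|_{F_{k,p,\bsgamma}}$ respectively, and recombine via H\"older for two summands using the exact additivity $\|f-f_k\|^p+\|f_k\|^p=\|f\|^p$ of the anchored norm. The only cosmetic difference is that you invoke \eqref{I-nrm} with the truncated weights $\bsgamma^{(k)}$ where the paper re-derives that bound directly from the tensor-product structure of $\calI_s$ and a first application of H\"older over the sets $\setu\not\subseteq[k]$.
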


\begin{proof}
We prove the theorem for $p>1$ only since the proof for $p=1$ is
very similar. For any $f\in F_{s,p,\bsgamma}$
\begin{eqnarray*}
   \left|\calI_s(f)-Q^{\rm trnc}_{s,n,k}(f)\right|&=&
   \left|\calI_k(f([\cdot_{[k]};\bszero_{-[k]}]))
    -A_{k,n}(f([\cdot_{[k]};\bszero_{-[k]}])) +
   \sum_{\setv\not\subseteq[k]}\calI_s(f_\setv)\right|\\
  &\le& e(A_{k,n};F_{k,p,\bsgamma})\,\|f_k\|_{F_{k,p,\bsgamma}}+
   \sum_{\setv\not\subseteq[k]}|\calI_s(f_\setv)|.
\end{eqnarray*}
Since $\calI_s$ has the tensor product form and $f_\setv$ depends
only on $|\setv|$ variables,
\[
  |\calI_s(f_\setv)|\,\le\,\|f_\setv\|_{F_\setv}
  \left(\sup_{\substack{f\in F_{\{1\}}\\ \norm{f}_{F_{\{1\}}}\le 1}}
  \abs{\calI_1 (f)}\right)^{\abs{\setv}}
  \,=\,\|f_{\setv}^{(\setv)}\|_{L_p}\,\frac1{(p^*+1)^{|\setv|/p^*}},
\]
where we used that $\sup_{\substack{f\in F_{\{1\}}\\ \norm{f}_{F_{\{1\}}}\le 1}}
  \abs{\calI_1 (f)}=\frac1{(p^*+1)^{1/p^*}}$, which can be checked easily.
Therefore
\begin{eqnarray*}
 \sum_{\setv\not\subseteq[k]}|\calI_s(f_\setv)|
    &\le&\sum_{\setv\not\subseteq[k]}\gamma_\setv^{-1}\,
   \|f_{\setv}^{(\setv)}\|_{L_p}
   \,\frac{\gamma_{\setv}}{(p^*+1)^{|\setv|/p^*}}\\
 &\le& \left(\sum_{\setv\not\subseteq[k]}\gamma_\setv^{-p}\,
  \|f_\setv^{(\setv)}\|_{L_p}^p\right)^{1/p}
   \,\left(\sum_{\setv\not\subseteq[k]}\frac{\gamma_\setv^{p^*}}{(p^*+1)^{|\setv|}}
   \right)^{1/p^*}.
\end{eqnarray*}
Hence putting together, we get
\begin{eqnarray*}
  |\calI_s(f)-Q^{\rm trnc}_{s,n,k}(f)|&\le&
  e(A_{k,n};F_{k,p,\bsgamma})\,\left(\sum_{\setu\subseteq[k]}\gamma_\setu^{-p}\,
   \|f_\setu^{(\setu)}\|_{L_p}^p\right)^{1/p}\\
  &&\quad +
   \left(\sum_{\setu\not\subseteq[k]}\frac{\gamma_\setu^{p^*}}{(p^*+1)^{|\setu|}}
   \right)^{1/p^*}\,\left(\sum_{\setu\not\subseteq[k]}\gamma_\setu^{-p}\,
    \|f_\setu^{(\setu)}\|_{L_p}^p\right)^{1/p}\\
\end{eqnarray*}
Finally, using the H\"older inequality one more time, we get
\begin{eqnarray*}
\lefteqn{|\calI_s(f)-Q^{\rm trnc}_{s,n,k}(f)|}\\
  &\le& \left(\sum_{\setu\subseteq[s]}\gamma_\setu^{-p}\,
   \|f_\setu^{(\setu)}\|_{L_p}^p\right)^{1/p}\,
   \left([e(A_{k,n};F_{k,p,\bsgamma})]^{p^*}+
   \sum_{\setu\not\subseteq[s]}\frac{\gamma_\setu^{p^*}}{(p^*+1)^{|\setu|}}
   \right)^{1/p^*}.
\end{eqnarray*}
This completes the proof.
\end{proof}

We now apply this theorem to product weights.
First we prove an upper bound on the truncation error.

\begin{proposition}\label{proptrunc}
Consider product weights $\gamma_\setu=\prod_{j\in\setu}\gamma_j$ and
$k\le s$. The truncation error is bounded by
\[
\left(\sum_{\setu\not\subseteq[k]}\frac{\gamma_\setu^{p^*}}{(p^*+1)^{|\setu|}}
\right)^{1/p^*}
 \,\le\, \|\calI_s\|
\left(1-\exp\left(\frac{-3}{2\,(p^*+1)}\sum_{j=k+1}^s \gamma_j^{p^*}
\right)\right)^{1/p^*}\quad\mbox{for\ }p>1,
\]
and it is equal to
\[
  \max_{\setu\not\subseteq[k]}\gamma_\setu \quad\mbox{for\ }p=1.
\]
\end{proposition}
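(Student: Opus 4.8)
The plan is to turn the left-hand side into a closed-form product and then finish with the elementary inequality $\ln(1+x)\le x$. The case $p=1$ needs essentially no work: by Theorem~\ref{thm-main} (taken with an exact sub-routine $A_{k,n}=\calI_k$) the truncation error is at most $\max_{\setu\not\subseteq[k]}\gamma_\setu$, and the matching lower bound is obtained from the single function $f=f_{\setu^*}$, where $\setu^*$ maximizes $\gamma_\setu$ over $\setu\not\subseteq[k]$: for such an $f$ one has $f_k\equiv0$, and $\sup_{\|f\|\le1}|\calI_s(f)|=\gamma_{\setu^*}$ because $\sup_{\|g\|_{F_{\{1\}}}\le1}|\calI_1(g)|=1$ when $p=1$. (For product weights this common value is just $\gamma_{k+1}$.) From now on $p>1$.

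Abbreviate $q:=p^*$ and $a_j:=\gamma_j^{q}/(q+1)$ for $1\le j\le s$. The key step is to expand $\prod_{j=1}^{s}(1+a_j)=\sum_{\setu\subseteq\sset}\prod_{j\in\setu}a_j$ and to recognize, from the product form of \eqref{I-nrm} for product weights, that this sum equals $\|\calI_s\|^{q}$; in particular it is $\ge1$, so dividing by it is legitimate. Splitting the sum over $\setu\subseteq\sset$ into the part with $\setu\subseteq[k]$ (which sums to $\prod_{j=1}^{k}(1+a_j)$) and the part with $\setu\not\subseteq[k]$, and factoring $\prod_{j=1}^{k}(1+a_j)$ out of the latter, gives
\[
\sum_{\setu\not\subseteq[k]}\prod_{j\in\setu}a_j
=\prod_{j=1}^{k}(1+a_j)\left(\prod_{j=k+1}^{s}(1+a_j)-1\right)
=\|\calI_s\|^{q}\left(1-\prod_{j=k+1}^{s}(1+a_j)^{-1}\right),
\]
the second equality because $\prod_{j=1}^{k}(1+a_j)=\|\calI_s\|^{q}\big/\prod_{j=k+1}^{s}(1+a_j)$. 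Since $\sum_{\setu\not\subseteq[k]}\gamma_\setu^{q}/(q+1)^{|\setu|}$ is exactly $\sum_{\setu\not\subseteq[k]}\prod_{j\in\setu}a_j$, taking $q$-th roots yields the \emph{identity}
\[
\left(\sum_{\setu\not\subseteq[k]}\frac{\gamma_\setu^{p^*}}{(p^*+1)^{|\setu|}}\right)^{1/p^*}
=\|\calI_s\|\left(1-\prod_{j=k+1}^{s}(1+a_j)^{-1}\right)^{1/p^*}.
\]

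It remains only to bound the tail product $\prod_{j=k+1}^{s}(1+a_j)^{-1}$ from below. Using $\ln(1+x)\le x$ for $x\ge0$ one gets $\prod_{j=k+1}^{s}(1+a_j)\le\exp\big(\sum_{j=k+1}^{s}a_j\big)\le\exp\big(\tfrac{3}{2}\sum_{j=k+1}^{s}a_j\big)$, hence $\prod_{j=k+1}^{s}(1+a_j)^{-1}\ge\exp\big(-\tfrac{3}{2}\sum_{j=k+1}^{s}a_j\big)$. Substituting this into the identity above, noting $\tfrac{3}{2}\sum_{j=k+1}^{s}a_j=\tfrac{3}{2(p^*+1)}\sum_{j=k+1}^{s}\gamma_j^{p^*}$, and using that $t\mapsto(1-t)^{1/p^*}$ is increasing, gives the claimed bound. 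I do not see a genuine obstacle in this argument: the only two points worth a second look are that $\|\calI_s\|^{p^*}=\prod_{j=1}^{s}(1+a_j)\ge1$, which is what makes the factorization step legitimate, and the boundary case $k=s$, where the left-hand side, $1-\prod_{j=s+1}^{s}(1+a_j)^{-1}$, and the exponent all vanish and the inequality reads $0\le0$. (In fact the constant $1$ would already do in place of $3/2$, so the stated bound has room to spare.)
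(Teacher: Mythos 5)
Your argument is correct and follows essentially the same route as the paper: the same expansion of $\sum_{\setu\not\subseteq[k]}$ as a difference of products, the same factorization giving $\|\calI_s\|^{p^*}\bigl(1-\prod_{j=k+1}^s(1+a_j)^{-1}\bigr)$, and then an exponential lower bound on the tail product. The only difference is the final elementary step: you use $\log(1+x)\le x$, which is slightly cleaner than the paper's $\log(1-x)\ge -3x/2$ for $x\in[0,1/2]$ and, as you note, shows the constant $3/2$ in the exponent could be replaced by $1$.
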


\begin{proof}
The proof for $p=1$ is trivial. For $p>1$, we have
\begin{eqnarray*}
  \sum_{\setu\not\subseteq[k]}\frac{\gamma_\setu^{p^*}}{(p^*+1)^{|\setu|}}
   &=&\sum_{\setu\subseteq\sset}\frac{\gamma_\setu^{p^*}}{(p^*+1)^{|\setu|}}
   -\sum_{\setu\subseteq[k]}\frac{\gamma_\setu^{p^*}}{(p^*+1)^{|\setu|}}\\
   &=&\prod_{j=1}^s\left(1+\frac{\gamma_j^{p^*}}{p^*+1}\right)
   -\prod_{j=1}^k\left(1+\frac{\gamma_j^{p^*}}{p^*+1}\right)\\
   &=& \prod_{j=1}^s\left(1+\frac{\gamma_j^{p^*}}{p^*+1}\right)
     \left(1-\prod_{j=k+1}^s \frac{p^* +1}{p^* +1
    + \gamma_j^{p^*}}\right)\\    &=&\|\calI_s\|^{p^*}
  \left(1-\prod_{j=k+1}^s \frac{p^* +1}{p^* +1 + \gamma_j^{p^*}}\right).
\end{eqnarray*}
We have
\begin{eqnarray*}
 1-\prod_{j=k+1}^s \frac{p^* +1}{p^* +1 + \gamma_j^{p^*}}
&=& 1-\exp \left(\sum_{j=k+1}^s \log\frac{p^* +1}{p^*+1+\gamma_j^{p^*}}\right)\\
&=&1-\exp\left(\sum_{j=k+1}^s
\log\left(1-\frac{\gamma_j^{p^*}}{p^*+1+\gamma_j^{p^*}}\right)\right),
\end{eqnarray*}
where $\log$ denotes the natural logarithm.
Note that for $x\in[0,1/2]$,
\begin{eqnarray*}
\log(1-x)&=&-x\,\left(1+\frac{x}2+\frac{x^2}3+\cdots\right)\,\ge\,
       -x\,\left(1+\frac{x}2\left(1+x+x^2+\cdots\right)\right)\\
         &=&-x\,\left(1+\frac{x}{2\,(1-x)}\right)\,\ge\,-\frac{3\,x}2.
\end{eqnarray*}
Since we assumed that the product weights are bounded by 1, we can
apply this estimate to the above expression, and therefore
\begin{eqnarray*}
1-\prod_{j=k+1}^s \frac{p^* +1}{p^* +1 + \gamma_j^{p^*}}&\le&
  1-\exp\left(\frac{-3}{2}\,\sum_{j=k+1}^s
\frac{\gamma_j^{p^*}}{p^*+1+\gamma_j^{p^*}}\right)\\
&\le&  1-\exp\left(\frac{-3}{2\,(p^*+1)}\,\sum_{j=k+1}^s
   \gamma_j^{p^*}\right).
\end{eqnarray*}
This completes the proof.
\end{proof}

We have the following corollary.

\begin{corollary}\label{cortrunc}
Consider product weights  and $k\le s$. For $p>1$, we have
\[
  e(Q^{\rm trnc}_{s,n,k};F_{s,p,\bsgamma})\,\le\,
  \left([e(A_{k,n};F_{k,p,\bsgamma})]^{p^*}+\|\calI_s\|^{p^*}\,
\left(1-\exp
\left(\frac{-3}{2\,(p^*+1)}\sum_{j=k+1}^s \gamma_j^{p^*}\right)\right)
\right)^{1/p^*}
   \]
and for $p=1$, we have
\[
  e(Q^{\rm trnc}_{s,n,k};F_{s,1,\bsgamma})\,\le\,
  \max\left(e(A_{k,n};F_{k,1,\bsgamma})\,,\,\gamma_{k+1}
\right),
\]
where $\gamma_{k+1}=0$ if $k=s$.

Therefore, for the worst case error of $Q_{s,n,k}^{\rm trnc}$ not to
exceed the error demand $\e>0$, it is enough to choose $k=k(\e)$
so that
\begin{equation}\label{trunccrit}
1-\exp\left(\frac{-3}{2(p^*+1)}\,
  \sum_{j=k+1}^s \gamma_j^{p^*}\right)\,\le\,
  \frac{(\e/\|\calI_s\|)^{p^*}}{2},
\end{equation}
(or $\gamma_{k+1}\le\e$
for $p=1$),
and next to choose $n=n(\e)$ so that
\[
 e(A_{k,n};F_{k,1,\bsgamma})\,\le\,\frac{\e}{2^{1/p^*}}.
\]
\end{corollary}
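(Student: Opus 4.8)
The plan is to read off the corollary directly from Theorem~\ref{thm-main} and Proposition~\ref{proptrunc}, with a single elementary observation about product weights.

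For $p>1$ I would take the first bound of Theorem~\ref{thm-main},
\[
  e(Q^{\rm trnc}_{s,n,k};F_{s,p,\bsgamma})\,\le\,
  \Bigl( [e(A_{k,n};F_{k,p,\bsgamma})]^{p^*}  + \sum_{\setu\not\subseteq[k]}
\tfrac{\gamma_\setu^{p^*}}{(p^*+1)^{|\setu|}} \Bigr)^{1/p^*},
\]
and substitute into the inner sum the estimate of Proposition~\ref{proptrunc}, which after raising to the power $p^*$ reads $\sum_{\setu\not\subseteq[k]}\gamma_\setu^{p^*}/(p^*+1)^{|\setu|}\le \|\calI_s\|^{p^*}\,(1-\exp(\tfrac{-3}{2(p^*+1)}\sum_{j=k+1}^s\gamma_j^{p^*}))$. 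Since $t\mapsto t^{1/p^*}$ is increasing on $[0,\infty)$, this yields the claimed $p>1$ bound at once.

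For $p=1$ I would start from the second bound of Theorem~\ref{thm-main}, namely $e(Q^{\rm trnc}_{s,n,k};F_{s,1,\bsgamma})\le\max(e(A_{k,n};F_{k,1,\bsgamma}),\max_{\setu\not\subseteq[k]}\gamma_\setu)$, and simplify its truncation term. Under the standing normalization $1\ge\gamma_1\ge\gamma_2\ge\cdots>0$, every $\setu\not\subseteq[k]$ contains some index $j>k$, and deleting all the other factors (each at most $1$) does not decrease $\prod_{j\in\setu}\gamma_j$; hence $\prod_{j\in\setu}\gamma_j\le\gamma_{\min\setu}\le\gamma_{k+1}$, with equality for $\setu=\{k+1\}$. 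Therefore $\max_{\setu\not\subseteq[k]}\gamma_\setu=\gamma_{k+1}$, which is $0$ when $k=s$, in agreement with the convention in Theorem~\ref{thm-main}; this gives the second displayed bound.

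Finally, for the sufficient conditions on $k$ and $n$, I would split the target $\e$ into two halves. For $p>1$ it is enough that each of the two summands inside the outer $p^*$-th power be at most $\e^{p^*}/2$: the summand involving the algorithm error forces $e(A_{k,n};F_{k,p,\bsgamma})\le\e/2^{1/p^*}$, while the truncation summand, after division by $\|\calI_s\|^{p^*}$, is exactly condition~\eqref{trunccrit}. For $p=1$, where $p^*=\infty$ and $2^{1/p^*}=1$, the maximum is $\le\e$ precisely when $e(A_{k,n};F_{k,1,\bsgamma})\le\e$ and $\gamma_{k+1}\le\e$, matching the stated conditions. The only step that is not a mechanical substitution is the identity $\max_{\setu\not\subseteq[k]}\gamma_\setu=\gamma_{k+1}$, and even that is immediate from the ordering and normalization of the weights, so I expect no genuine obstacle.
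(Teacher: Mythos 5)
Your derivation is correct and is exactly how the paper obtains this corollary, which it states without proof as an immediate consequence of Theorem~\ref{thm-main} and Proposition~\ref{proptrunc}: substitute the proposition's bound into the theorem's, use monotonicity of $t\mapsto t^{1/p^*}$, and split the error budget into two halves. One tiny notational slip in your $p=1$ argument: the chain $\prod_{j\in\setu}\gamma_j\le\gamma_{\min\setu}\le\gamma_{k+1}$ fails when $\min\setu\le k$ (e.g.\ $\setu=\{1,k+1\}$); you should instead retain a factor $\gamma_{j_0}$ for some $j_0\in\setu$ with $j_0>k$, which gives $\prod_{j\in\setu}\gamma_j\le\gamma_{j_0}\le\gamma_{k+1}$ as your preceding verbal description already indicates.
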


Clearly the inequality \eqref{trunccrit} for $p>1$ is equivalent to
\begin{equation}\label{star}
  \sum_{j=k+1}^s \gamma_j^{p^*}\,\le\,-\frac{2\,(p^*+1)}3\,\log\left(1-
 \frac{(\e/\|\calI_s\|)^{p^*}}{2}\right).
\end{equation}

\begin{example}\label{ex:1}
Consider large $s$ including $s=\infty$ and
\[
\gamma_\setu\,=\,\prod_{j\in\setu}j^{-a}\quad\mbox{for\ }
a\,>\,1/p^*.
\]
Recall that then
\[
   \|\calI_s\|\,=\,\prod_{j=1}^s\left(1+\frac{j^{-ap^*}}{p^*+1}\right)^{1/p^*}
\]
for $p>1$ and $\|\calI_s\|=1$ for $p=1$.
Hence it is enough to take
\[
k(\e)\,=\,\left\lceil \e^{-1/a}-1\right\rceil\quad\mbox{for}\quad p\,=\,1.
\]
For $p>1$, we have
\[
  \frac{(k+1)^{-ap^*+1}}{ap^*-1}\,=\,\int_{k+1}^\infty x^{-ap^*}\rd x\,<\,
\sum_{j=k+1}^\infty j^{-ap^*}\,\le\,\int_{k+1/2}^\infty x^{-ap^*}\rd x\,=\,
\frac{(k+1/2)^{-ap^*+1}}{a\,p^*-1}.
\]
Therefore, to satisfy \eqref{star}, it is enough to take
\[
k\,=\,k(\e)\,=\,
\left\lceil\left(\frac{-3}{2 (p^*+1)\,(ap^*-1)\,\log(1-(\e/\|\calI_s\|)^{p^*}/2)}
\right)^{1/(ap^*-1)}-\frac12\right\rceil\quad\mbox{for}\quad
p>1.
\]

For $p=p^*=2$, which corresponds to the classical Hilbert space
setting, we have
\[
k(\e)\,=\,\left\lceil (-2\,(2\,a-1)\,\log(1-(\e/\|\calI_s\|)^2/2))^{-1/(2a-1)}
-\tfrac12\right\rceil.
\]
In calculating the values of $k(\e)$, we slightly overestimated the
norm of $\calI_s$ in the following way
\begin{eqnarray*}
  \|\calI_s\|^2 &\le& \prod_{j=1}^\infty \left(1+\frac{j^{-2a}}3\right)
   \,\le\,\prod_{j=1}^3\left(1+\frac{j^{-2a}}3\right)\,
   \exp\left(\sum_{j=4}^\infty\frac{j^{-2a}}3\right)\\
  &\le&\prod_{j=1}^3\left(1+\frac{j^{-2a}}3\right)\,
   \exp\left(\frac13\,\int_{3.5}^\infty x^{-2a}\rd x\right)\\
  &=&   \prod_{j=1}^3\left(1+\frac{j^{-2a}}3\right)\,
  \exp\left(\frac1{3\,(2a-1)}\,3.5^{-2a+1}\right).
\end{eqnarray*}
This gave us the following estimations for $\|\calI_s\|^2$
for $p=2$:
\[
   1.3703\ \mbox{for\ }a=2,\qquad 1.3411\ \mbox{for\ }a=3, \qquad
1.3352\ \mbox{for\ }a=4.
\]
Below are values of $k(\e)$ for $a=2,3,4$.
We have
\[
\begin{array}{c||c|c|c|c|c|c}
\e &  10^{-1}  &  10^{-2}  & 10^{-3}  & 10^{-4}  & 10^{-5}  &  10^{-6}\\
\hline
k(\e)& 4      &  17       & 77      & 357      & 1659    & 7701
\end{array}  \quad\mbox{for\ }a\,=\,2,
\]
\[
\begin{array}{c||c|c|c|c|c|c}
\e &  10^{-1}  &  10^{-2}  & 10^{-3}  & 10^{-4}  & 10^{-5}  &  10^{-6}\\
\hline
k(\e)& 2      &  6       &  12      & 31      & 77      & 193
\end{array}\quad\mbox{for\ }a=3,
\]
and
\[
\begin{array}{c||c|c|c|c|c|c}
\e &  10^{-1}  &  10^{-2}  & 10^{-3}  & 10^{-4}  & 10^{-5}  &  10^{-6}\\
\hline
k(\e)& 2      &  3        &  6     & 11      & 21       &  41
\end{array}\quad\mbox{for\ }a=4.
\]
It is clear that $k(\e)$ decreases with increasing $a$.
It also decreases when $p$ decreases as illustrated below for $p=1$:
\[
\begin{array}{c||c|c|c|c|c|c}
\e &  10^{-1}  &  10^{-2}  & 10^{-3}  & 10^{-4}  & 10^{-5}  &  10^{-6}\\
\hline
k(\e)& 3      &  9      &  31      & 99     & 316   & 999
\end{array}\quad\mbox{for\ }a=2,
\]
\[
\begin{array}{c||c|c|c|c|c|c}
\e &  10^{-1}  &  10^{-2}  & 10^{-3}  & 10^{-4}  & 10^{-5}  &  10^{-6}\\
\hline
k(\e)& 2      &  4        &  9     & 21      & 46   & 99
\end{array}\quad\mbox{for\ }a=3,
\]
and
\[
\begin{array}{c||c|c|c|c|c|c}
\e &  10^{-1}  &  10^{-2}  & 10^{-3}  & 10^{-4}  & 10^{-5}  &  10^{-6}\\
\hline
k(\e)& 1      &  3        &  5     &  9      & 17   &  31
\end{array}\quad\mbox{for\ }a=4.
\]
\end{example}

We end this section with the following remark concerning the
{\em normalized worst case error}. In a number of papers on
tractability of integration, instead of the {\em standard} worst
case error the normalized one is used. It is defined by
\[
   e^{\rm nrm}(Q_{s,n};F_{s,p,\bsgamma})\,:=\,
   \frac{e(Q_{s,n};F_{s,p,\bsgamma})}{\|\calI_s\|}.
\]
It follows clearly from Corollary \ref{cortrunc} that the following is
true for product weights.

\begin{corollary}\label{cor-nrm}
Consider product weights and $k\le s$. For $p>1$, we have
\[
e^{\rm nrm}(Q^{\rm trnc}_{s,n,k};F_{k,p,\bsgamma})\,\le\,
\left(\left[e^{\rm  nrm}(A_{k,n};F_{k,p,\bsgamma})\,
  \frac{\|\calI_k\|}{\|\calI_s\|}\right]^{p^*}+1-
  \exp\left(\frac{-3}{2(p^*+1)}\,\sum_{j=k+1}^s\gamma_j^{p^*}\right)
  \right)^{1/p^*}.
\]
Note that for any $p$ and any weights
\[
   \frac{\|\calI_k\|}{\|\calI_s\|}\,\le\,1.
\]
\end{corollary}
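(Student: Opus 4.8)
The plan is to deduce the asserted inequality directly from Corollary~\ref{cortrunc} by dividing its error bound by $\|\calI_s\|$ and then re-expressing the absolute worst case error of the $k$-variate algorithm $A_{k,n}$ through its normalized counterpart.

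Concretely, I would start from the bound of Corollary~\ref{cortrunc} for $p>1$,
\[
  e(Q^{\rm trnc}_{s,n,k};F_{s,p,\bsgamma})\,\le\,
  \left([e(A_{k,n};F_{k,p,\bsgamma})]^{p^*}+\|\calI_s\|^{p^*}\,
  \left(1-\exp\left(\frac{-3}{2(p^*+1)}\sum_{j=k+1}^s\gamma_j^{p^*}\right)\right)
  \right)^{1/p^*},
\]
and divide both sides by $\|\calI_s\|$. Since $t\mapsto t^{1/p^*}$ is positively homogeneous of degree $1/p^*$, the factor $\|\calI_s\|^{-p^*}$ may be pulled inside the bracket, which gives
\[
  e^{\rm nrm}(Q^{\rm trnc}_{s,n,k};F_{s,p,\bsgamma})\,\le\,
  \left(\left[\frac{e(A_{k,n};F_{k,p,\bsgamma})}{\|\calI_s\|}\right]^{p^*}
  +1-\exp\left(\frac{-3}{2(p^*+1)}\sum_{j=k+1}^s\gamma_j^{p^*}\right)\right)^{1/p^*}.
\]
By the definition of the normalized error applied to the $k$-variate problem, $e(A_{k,n};F_{k,p,\bsgamma})=e^{\rm nrm}(A_{k,n};F_{k,p,\bsgamma})\,\|\calI_k\|$, so that
\[
  \frac{e(A_{k,n};F_{k,p,\bsgamma})}{\|\calI_s\|}
  \,=\,e^{\rm nrm}(A_{k,n};F_{k,p,\bsgamma})\,\frac{\|\calI_k\|}{\|\calI_s\|},
\]
and substituting this into the previous display yields exactly the claimed bound.

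It then remains to verify the appended inequality $\|\calI_k\|/\|\calI_s\|\le1$, valid for every $p$ and arbitrary (not necessarily product) weights. This is immediate from formula~\eqref{I-nrm}: for $p>1$ the collection of $\setu\subseteq[k]$ with $\gamma_\setu>0$ is contained in the corresponding collection of $\setu\subseteq[s]$, and every summand $\gamma_\setu^{p^*}/(p^*+1)^{|\setu|}$ is non-negative, so $\|\calI_k\|^{p^*}\le\|\calI_s\|^{p^*}$; for $p=1$ one likewise has $\|\calI_k\|=\max_{\setu\subseteq[k]}\gamma_\setu\le\max_{\setu\subseteq[s]}\gamma_\setu=\|\calI_s\|$. (For product weights the $p>1$ case is just $\prod_{j=1}^k(1+\gamma_j^{p^*}/(p^*+1))\le\prod_{j=1}^s(1+\gamma_j^{p^*}/(p^*+1))$, the extra factors being at least $1$.)

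I do not expect any real obstacle: the corollary is essentially bookkeeping. The only two points worth a line of care are that the constant $\|\calI_s\|^{-p^*}$ can be absorbed inside the $p^*$-th power, and that one must keep straight which integration problem each error is normalized against — $Q^{\rm trnc}_{s,n,k}$ against $\|\calI_s\|$ and $A_{k,n}$ against $\|\calI_k\|$ — it being precisely the ratio $\|\calI_k\|/\|\calI_s\|\le1$ that records the advantage of passing to the normalized setting.
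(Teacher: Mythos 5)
Your derivation is correct and is exactly the route the paper intends: the authors give no explicit proof, stating only that the corollary ``follows clearly from Corollary~\ref{cortrunc},'' and your dividing by $\|\calI_s\|$, absorbing the constant inside the $p^*$-th power, and rewriting $e(A_{k,n};F_{k,p,\bsgamma})=e^{\rm nrm}(A_{k,n};F_{k,p,\bsgamma})\,\|\calI_k\|$ is precisely that omitted bookkeeping, together with the correct monotonicity argument for $\|\calI_k\|/\|\calI_s\|\le1$ from \eqref{I-nrm}. (The space on the left-hand side of the statement should read $F_{s,p,\bsgamma}$ rather than $F_{k,p,\bsgamma}$ --- a typo in the paper that your proof implicitly corrects.)
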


The corresponding numbers $k^{\rm nrm}(\e)$ for which the normalized
truncation error is bounded by $\e/2^{1/p^*}$,
\[
\left(1-\exp\left(\frac{-3}{2(p^*+1)}\,\sum_{j=k^{\rm nrm}(\e)+1}^s
\gamma_j^{p^*}\right)\right)^{1/p^*}
  \,\le\,\frac{\e}{2^{1/p^*}},
\]
are smaller than $k(\e)$
since for product weights $\|\calI_s\|> 1$. For instance, for $p=2$
and $\gamma_j^{-a}$, we have
\[
\begin{array}{c||c|c|c|c|c|c}
\e &  10^{-1}  &  10^{-2}  & 10^{-3}  & 10^{-4}  & 10^{-5}  &  10^{-6}\\
\hline
k^{\rm nrm}(\e)& 3      &  15      &  69      & 322     & 1494   & 6334
\end{array}  \quad\mbox{for\ }a\,=\,2,
\]
\[
\begin{array}{c||c|c|c|c|c|c}
\e &  10^{-1}  &  10^{-2}  & 10^{-3}  & 10^{-4}  & 10^{-5}  &  10^{-6}\\
\hline
k^{\rm nrm}(\e)& 2      &  5        &  11     & 29      & 72   & 182
\end{array}\quad\mbox{for\ }a=3,
\]
and
\[
\begin{array}{c||c|c|c|c|c|c}
\e &  10^{-1}  &  10^{-2}  & 10^{-3}  & 10^{-4}  & 10^{-5}  &  10^{-6}\\
\hline
k^{\rm nrm}(\e)& 1      &  3        &  5     & 11      & 20   & 39
\end{array}\quad\mbox{for\ }a=4.
\]

\section{ANOVA Decomposition and Truncation}
It is well known, see, e.g., \cite{KSWW10a}, that functions $h\in
H_{s,p,\bsgamma}$ also have a unique decomposition
\[
  h\,=\,\sum_{\setu\in\setU} h_\setu,
\]
where each $h_\setu$ depends only on the variables $x_j$ for
$j\in\setu$, and
\[
  \int_0^1 h_\setu(\bsx)\rd x_j\,=\,0\quad \mbox{if\ }j\in\setu.
\]
Unfortunately, unlike in the anchored decomposition, the terms
$h_\setu$ and $\sum_{\setu\subseteq[k]}h_\setu$ ($k<s$) cannot
be sampled.
This means that the truncation approach presented in
the previous section would not work in general since one cannot
get sharp estimations of the worst case truncation error
\[
  \sup_{\|h\|_{H_{s,p,\bsgamma}}\le 1}\left|\calI_s(h)-\int_{[0,1]^k}
   h(x_1,\dots,x_k,0,\dots,0)\rd(x_1,\dots,x_k)\right|.
\]

However it works
for product weights with sufficiently fast decaying $\gamma_j$'s. This
is why we assume for the rest of the paper that the weights have the
product form.

For product weights, the spaces $F_{s,p,\bsgamma}$ and
$H_{s,p,\bsgamma}$ (as sets of functions) are equal, see \cite{HeRiWa15}.
Moreover the embedding
\[
  \imath_{s,p,\bsgamma}: F_{s,p,\bsgamma}\hookrightarrow H_{s,p,\bsgamma}
\]
and its inverse
\[
  \imath_{s,p,\bsgamma}^{-1}: H_{s,p,\bsgamma}\hookrightarrow F_{s,p,\bsgamma}
\]
are bounded\footnote{Let $F$ and $H$ be normed spaces with norm
$\|\cdot\|_F$ and $\|\cdot\|_H$, respectively. We say that $F$ is
continuously embedded in $H$ and write $F \hookrightarrow H$, if
$F \subseteq H$ and if the inclusion map $\imath:F\rightarrow H$,
$x\mapsto x$, is continuous, i.e., if there exists some $C>0$ such
that $\|x\|_H \le C \|x\|_F$ for all $x \in F$.}. Indeed, it was
shown in \cite{HeRi13} that for $p=2$ we have
\[
   \max\left(\|\imath_{s,2,\bsgamma}\|\,,\,\|\imath_{s,2,\bsgamma}^{-1}\|
    \right)\,\le\,\prod_{j=1}^s\left(1+\frac{\gamma_j}{\sqrt{3}}
     +\frac{\gamma_j^2}3\right)^{1/2}.
\]
Next, it was shown in \cite{HeRiWa15} that for $p=1$ and
$p=\infty$
\[
  \|\imath_{s,1,\bsgamma}\|\,=\,\|\imath_{s,1,\bsgamma}^{-1}\|\,=\,
   \prod_{j=1}^s\left(1+\gamma_j\right)\quad\mbox{and}\quad
 \|\imath_{s,\infty,\bsgamma}\|\,=\,\|\imath_{s,\infty,\bsgamma}^{-1}\|\,=\,
   \prod_{j=1}^s\left(1+\gamma_j/2\right).
\]
Finally, the authors of \cite{SH} showed, applying the theory of
interpolation to the above result, that for every
$p\in[1,\infty]$, we have
\[
  \max\left(\|\imath_{s,p,\bsgamma}\|\,,\,\|\imath_{s,p,\bsgamma}^{-1}\|
      \right)\,\le\,\prod_{j=1}^s\left(1+\gamma_j\right).
\]

The following theorem provides for $p=2$ the exact value of the norms of the
embeddings
$\imath_{s,p,\bsgamma}$ and $\imath^{-1}_{s,p,\bsgamma}$ and shows
that these norms are equal.

\begin{theorem}\label{thm:p2}
Consider product weights. For $p=2$,
\begin{equation}\label{equiv}
  \|\imath_{s,2,\bsgamma}\|\,=\,\|\imath_{s,2,\bsgamma}^{-1}\|\,
  =\,\prod_{j=1}^s\left(1+\frac{\gamma_j}{\sqrt{3}}\,
    \left(\sqrt{1+\frac{\gamma_j^2}{12}}
     +\frac{\gamma_j}{\sqrt{12}}\right)\right)^{1/2}.
\end{equation}
Moreover
\begin{equation}\label{eq:new}
  1+\frac{\gamma_j}{\sqrt{3}}+\frac{\gamma_j^2}6\,\le\,
  1+\frac{\gamma_j}{\sqrt{3}}\,
    \left(\sqrt{1+\frac{\gamma_j^2}{12}}
     +\frac{\gamma_j}{\sqrt{12}}\right)\,\le\,
  1+\frac{\gamma_j}{\sqrt{3}}+\frac{\gamma_j^2}6+
  \frac{\gamma_j^3}{24\,\sqrt{3}}.
\end{equation}
\end{theorem}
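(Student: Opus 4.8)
Since the spaces carry a product structure, the natural plan is to reduce the computation of the embedding norm to a one-dimensional problem. For $p=2$ both $F_{s,2,\bsgamma}$ and $H_{s,2,\bsgamma}$ are reproducing kernel Hilbert spaces whose kernels are the tensor products $K'=\bigotimes_{j=1}^s(1+\gamma_j^2\min(x_j,y_j))$ and $K=\bigotimes_{j=1}^s K_{\gamma_j}$ recorded in Section~2. The operator-norm identity $\|\imath_{s,2,\bsgamma}\|=\|\imath_{s,2,\bsgamma}^{-1}\|$ is then a general fact about a change of inner product on the same underlying space: if $H_1,H_2$ are the same vector space with two inner products, the eigenvalues of the ``ratio'' operator come in reciprocal pairs only when something special happens, but here the cleaner route is that $\|\imath\|^2$ and $\|\imath^{-1}\|^2$ are respectively the largest and the reciprocal of the smallest value of the generalized Rayleigh quotient $\|f\|_{H}^2/\|f\|_F^2$, and for tensor products each of these factorizes over $j$. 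So the whole statement follows once I show, for the one-variable spaces $F_{1,2,\gamma_j}$ and $H_{1,2,\gamma_j}$, that
\[
 \sup_{f\neq 0}\frac{\|f\|_{H_{1,2,\gamma_j}}^2}{\|f\|_{F_{1,2,\gamma_j}}^2}
 \;=\;\Bigl(\inf_{f\neq 0}\frac{\|f\|_{H_{1,2,\gamma_j}}^2}{\|f\|_{F_{1,2,\gamma_j}}^2}\Bigr)^{-1}
 \;=\;1+\frac{\gamma_j}{\sqrt3}\Bigl(\sqrt{1+\tfrac{\gamma_j^2}{12}}+\tfrac{\gamma_j}{\sqrt{12}}\Bigr),
\]
and then multiply these factors over $j$. (The reciprocity in one dimension is what makes $\|\imath\|=\|\imath^{-1}\|$ globally; I expect it to come out of the explicit $2\times 2$ eigenvalue computation below, the two eigenvalues being mutually reciprocal.)

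The core computation is therefore the one-dimensional eigenvalue problem. Fix $\gamma=\gamma_j$ and write $f\in F_{1,2,\gamma}$ with $f(0)=0$; the anchored norm is $\|f\|_F^2=\gamma^{-2}\|f'\|_{L_2}^2$ (the $\setu=\emptyset$ term vanishes since $f(0)=0$), while by the ANOVA norm formula $\|f\|_H^2=\bigl(\int_0^1 f\bigr)^2+\gamma^{-2}\|f'\|_{L_2}^2$. Thus the Rayleigh quotient is $1+\gamma^2\bigl(\int_0^1 f\bigr)^2/\|f'\|_{L_2}^2$, and I must find the extreme values of $\bigl(\int_0^1 f\bigr)^2/\|f'\|_{L_2}^2$ over $f$ with $f(0)=0$. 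Writing $g=f'$ this is $\bigl(\int_0^1(1-t)g(t)\,dt\bigr)^2/\|g\|_{L_2}^2$, whose maximum is $\|1-t\|_{L_2}^2=\int_0^1(1-t)^2dt=1/3$ by Cauchy--Schwarz — wait: that would only give $1+\gamma^2/3$, not the stated value, so the subtlety is that we cannot have both norms be just ``constant-part plus derivative-part''; in fact the constant function does not lie in the closure of $F$ with $f(0)=0$, but more importantly the \emph{correct} one-variable ANOVA norm involves the projection $\int_0^1 f^{(\{1\})}(x)\,dx$ appearing in the $\setu=\{1\}$ term, so the cross term between the $\emptyset$- and $\{1\}$-components must be handled by diagonalizing a genuine $2\times 2$ Gram matrix. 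The plan is: decompose $f$ into its anchored pieces $f=c+f_{\{1\}}$ with $f_{\{1\}}(0)=0$, express both $\|f\|_F^2$ and $\|f\|_H^2$ as quadratic forms in $(c,f_{\{1\}})$ — the $F$-form is diagonal, the $H$-form has an off-diagonal term coming from $\int_0^1 f=c+\int_0^1 f_{\{1\}}$ — reduce to finding $\sup$ and $\inf$ of $\|f\|_H^2/\|f\|_F^2$, which by homogeneity becomes a two-dimensional generalized eigenvalue problem $\det(A-\lambda B)=0$, and solve the resulting quadratic. The two roots are reciprocal, their larger one is the claimed $1+\frac{\gamma}{\sqrt3}(\sqrt{1+\gamma^2/12}+\gamma/\sqrt{12})$, and taking square roots and products over $j$ yields \eqref{equiv}.

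For the two-sided bound \eqref{eq:new}, the left inequality is immediate since $\sqrt{1+\gamma^2/12}\ge 1$ forces $\sqrt{1+\gamma^2/12}+\gamma/\sqrt{12}\ge 1+\gamma/\sqrt{12}=1+\gamma/(2\sqrt3)$, whence $\frac{\gamma}{\sqrt3}(\cdots)\ge \frac{\gamma}{\sqrt3}+\frac{\gamma^2}{6}$. The right inequality is the one requiring a short estimate: I need $\sqrt{1+\gamma^2/12}\le 1+\gamma^2/24$, which is just the elementary $\sqrt{1+x}\le 1+x/2$ with $x=\gamma^2/12$; multiplying through by $\gamma/\sqrt3$ and adding $1+\gamma/\sqrt{12}\cdot\gamma/\sqrt3=1+\gamma^2/6$ reproduces exactly the right-hand side $1+\frac{\gamma}{\sqrt3}+\frac{\gamma^2}{6}+\frac{\gamma^3}{24\sqrt3}$. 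The main obstacle is purely bookkeeping in the $2\times 2$ reduction: setting up the Gram matrices of the anchored components in both norms correctly (in particular getting the off-diagonal entry of the $H$-form and the normalization of $f_{\{1\}}$ right) and then verifying that the quadratic $\det(A-\lambda B)=0$ has reciprocal roots — this last fact is what makes $\|\imath\|=\|\imath^{-1}\|$, and I would double-check it against the known special values $\|\imath_{s,1,\bsgamma}\|=\prod(1+\gamma_j)$ and the earlier upper bound $\prod(1+\gamma_j/\sqrt3+\gamma_j^2/3)^{1/2}$ from \cite{HeRi13}, both of which are consistent with \eqref{equiv} and \eqref{eq:new}.
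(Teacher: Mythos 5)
Your proposal is correct in substance and reaches the same one--dimensional computation as the paper, but it organizes that computation differently, and the difference is worth noting. The paper, after the same tensor--product reduction to $s=1$, parametrizes $f$ by the constant part $c$ and a unit--norm derivative $h$, bounds $\int_0^1 f$ by Cauchy--Schwarz, and then maximizes the one--parameter function $\rho(c;g)=(2c+g/\sqrt3)/(c^2+1)$ by calculus; the equality $\|\imath_{s,2,\bsgamma}\|=\|\imath_{s,2,\bsgamma}^{-1}\|$ is then obtained by a second, ``very similar'' argument. You instead set up a $2\times2$ generalized eigenvalue problem $\det(A-\lambda B)=0$ and observe that its two roots multiply to $1$ (indeed, with $\mu=1-\lambda$ the characteristic equation is $\gamma^{-2}\mu^2+\mu/3-1/3=0$, so $\lambda_1\lambda_2=1-(\mu_1+\mu_2)+\mu_1\mu_2=1$), which yields both $\|\imath\|$ and $\|\imath^{-1}\|$ in one stroke and \emph{explains} why they coincide rather than verifying it twice. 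That is a genuine (if modest) improvement in structure. The one step you must still make explicit is the passage from the pair $(c,f_{\{1\}})$ --- which is \emph{not} two--dimensional, since $f_{\{1\}}'$ ranges over $L_2$ --- to a genuine $2\times2$ problem: decompose $f'$ into its component along $1-t$ and the orthogonal complement, and note that the orthogonal part contributes identically to both quadratic forms, hence pushes the Rayleigh quotient toward $1$ and can be discarded for both the supremum and the infimum. This is exactly the role Cauchy--Schwarz (with equality at $h(t)=\sqrt3\,(1-t)$) plays in the paper's proof. Your derivation of \eqref{eq:new} from $\sqrt{1+x}\le 1+x/2$ and $\sqrt{1+\gamma_j^2/12}\ge1$ is correct and matches the paper's.
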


\begin{proof}
Since the spaces $H_{s,2,\bsgamma}$ and $F_{s,2,\bsgamma}$ are
tensor products of the corresponding spaces of univariate
functions, it is enough to prove \eqref{equiv} for $s=1$ and a
generic weight $g\in(0,1]$. Moreover we will only consider
$\|f\|_{H_{1,2,g}}/\|f\|_{F_{1,2,g}}$ since the proof for
$\|f\|_{F_{1,2,g}}/\|f\|_{H_{1,2,g}}$ is very similar.

Note that for $f\equiv c$,
$\|f\|_{H_{1,2,g}}/\|f\|_{F_{1,2,g}}=1$. Hence it is enough to
consider
\[
  f(x)\,=\,\frac{c}{g}+\int_0^1h(t)\,1_{[0,x)}(t) \rd t
\]
for some $c\ge 0$ and $\|h\|_{L_2}=1$. Then
\[
   \|f\|_{F_{1,2,g}}^2\,=\,g^{-2}\,\left(1+c^2\right).
\]
Moreover
\begin{equation}\label{help}
   \int_0^1f(x)\rd x\,=\,\frac{c}g+\int_0^1h(t)\,(1-t)\rd t
   \,\le\,
   \frac1g\,\left(c+\frac{g\,\|h\|_{L_2}}{\sqrt{3}}\right)
   \,=\,
   \frac1g\,\left(c+\frac{g}{\sqrt{3}}\right)
\end{equation}
and, therefore,
\[
   \|f\|_{H_{1,2,g}}^2\,\le\,g^{-2}\,
  \left(\left(c+\frac{g}{\sqrt{3}}\right)^2+1\right).
\]
Hence
\[
  \frac{\|f\|_{H_{1,2,g}}^2}{\|f\|_{F_{1,2,g}}^2}\,\le\,
  \frac{(c+g/\sqrt{3})^2+1}{c^2+1}\,=\,
   \frac{c^2+1+2\,g\,c/\sqrt{3}+g^2/3}{c^2+1}\,=\,
  1+\frac{g}{\sqrt{3}}\,\rho(c;g),
\]
where
\[
  \rho(c;g)\,=\,\frac{2\,c+g/\sqrt{3}}{c^2+1}.
\]
It is easy to verify that
\[
   \max_{c\ge 0}\rho(c;g)\,=\,\rho(c^*_g;g),
  \quad\mbox{where}\quad c^*_g\,=\,\sqrt{1+\frac{g^2}{12}}
    -\frac{g}{\sqrt{12}}
\]
and then
\[
  \frac{\|f\|_{H_{1,2,g}}^2}{\|f\|_{F_{1,2,g}}^2}\,\le\,1+
   \frac{g}{\sqrt{3}}\,\rho(c^*_g;g).
\]
This shows that $\|\imath_{1,2,g}\|\le1+g/\sqrt{3}\,
\rho(c^*_g;g)$. To prove equality it is enough to notice that for
$h(t)=\sqrt{3}\,(1-t)$ we have equality in \eqref{help}, i.e.,
\[
   \int_0^1 h(t)\,(1-t)\rd t\,=\,\frac1{\sqrt{3}}.
\]
This proves that
\begin{equation}\label{claim1}
  \|\imath_{1,2,g}\|^2\,=\,\left(1+\frac{g}{\sqrt{3}}\,
    \rho(c^*_g;g)\right).
\end{equation}

We now show that $\rho(c^*_g;g)=\sqrt{1+g^2/12}+g/\sqrt{12}$.
It is easy to verify that
\[
  \rho(c_g^*;g)\,=\,\frac{\sqrt{1+g^2/12}}{1+g^2/12-\sqrt{1+g^2/12}\,
  g/\sqrt{12}}\,=\,\frac1{\sqrt{1+g^2/12}-g/\sqrt{12}}.
\]
Therefore, applying the conjugate to the last fraction we get
\begin{equation}\label{claim2}
  \rho(c^*_g;g)=\frac1{\sqrt{1+g^2/12}-\sqrt{g^2/12}}\cdot
   \frac{\sqrt{1+g^2/12}+g/\sqrt{12}}{\sqrt{1+g^2/12}+g/\sqrt{12}}
   \,=\,\sqrt{1+\frac{g^2}{12}}+\frac{g}{\sqrt{12}}.
\end{equation}

We now prove \eqref{eq:new}. The first inequality is trivial.
Clearly
\begin{eqnarray*}
 \frac{g}{\sqrt{3}}\,\left(\sqrt{1+\frac{g^2}{12}}+
   \frac{g}{\sqrt{12}}\right)
  &=&\frac{g}{\sqrt{3}}+\frac{g}{\sqrt{3}}\,\left(
   \frac{g}{\sqrt{12}}+ \sqrt{1+\frac{g^2}{12}}-1\right)\\
  &=& \frac{g}{\sqrt{3}}+\frac{g^2}{6}+E(g),
\end{eqnarray*}
where
\[
  E(g)\,:=\,\frac{g}{\sqrt{3}}\,\left(\sqrt{1+\frac{g^2}{12}}-1\right).
\]
Of course the term $E(g)$ is nonnegative, a can be
upper-bounded by
\[
  E(g)\,=\,\frac{g}{\sqrt{3}}\,\frac{g^2/12}{\sqrt{1+g^2/12}+1}\,=\,
   \frac{g^3}{24\,\sqrt{3}}\,\frac2{\sqrt{1+g^2/12}+1}\,\le\,
  \frac{g^3}{24\,\sqrt{3}}.
\]
Consequently,
\[
  1+\frac{g}{\sqrt{3}}\,\left(\sqrt{1+\frac{g^2}{12}}+\frac{g}{\sqrt{12}}
   \right)\,\le\,1+\frac{g}{\sqrt{3}}+\frac{g^2}{6}+
  \frac{g^3}{24\,\sqrt{3}},
\]
which completes the proof.
\end{proof}

The importance of 
the fact that the corresponding embeddings are bounded is
captured by the following corollary.

\begin{corollary}\label{coremb}
For every integration rule $Q_{s,n}$ we have
\[
   e(Q_{s,n};F_{s,p,\bsgamma}) \,\le\, \|\imath_{s,p,\bsgamma}\|
   \,e(Q_{s,n}; H_{s,p,\bsgamma})\quad\mbox{and}\quad
  e(Q_{s,n}; H_{s,p,\bsgamma})\,\le\,\|\imath_{s,p,\bsgamma}^{-1}\|\,
  e(Q_{s,n};F_{s,p,\bsgamma}).
\]
\end{corollary}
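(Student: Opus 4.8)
The plan is to derive both inequalities directly from the definition of the worst-case error together with the defining property of an embedding constant. Recall that for any quadrature $Q_{s,n}$, the worst case error over a Banach space $X$ of integrands is $e(Q_{s,n};X)=\sup_{\|f\|_X\le 1}|\calI_s(f)-Q_{s,n}(f)|$, so the functional $\calI_s-Q_{s,n}$ is a bounded linear functional whose norm is exactly this error. The key observation is that this linear functional is the \emph{same} functional whether we regard its domain as $F_{s,p,\bsgamma}$ or as $H_{s,p,\bsgamma}$, because (for product weights) these spaces coincide as sets of functions and the rule $Q_{s,n}(f)=\sum_{i=1}^n a_i f(\bsx_i)$ only depends on point values of $f$, which are well defined in either space (the elements of $H_{s,p,\bsgamma}$ are continuous functions, and $F_{s,p,\bsgamma}$ consists of functions sampleable at points as used throughout Section~3).

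First I would fix $f\in F_{s,p,\bsgamma}$ with $\|f\|_{F_{s,p,\bsgamma}}\le 1$. Since $\imath_{s,p,\bsgamma}:F_{s,p,\bsgamma}\hookrightarrow H_{s,p,\bsgamma}$ is bounded, we have $\|f\|_{H_{s,p,\bsgamma}}\le\|\imath_{s,p,\bsgamma}\|\,\|f\|_{F_{s,p,\bsgamma}}\le\|\imath_{s,p,\bsgamma}\|$. Hence, writing $g=f/\|\imath_{s,p,\bsgamma}\|$ if $\|\imath_{s,p,\bsgamma}\|>0$ (and treating the trivial case separately), $g$ lies in the unit ball of $H_{s,p,\bsgamma}$, so by linearity
\[
  |\calI_s(f)-Q_{s,n}(f)|\,=\,\|\imath_{s,p,\bsgamma}\|\,|\calI_s(g)-Q_{s,n}(g)|
  \,\le\,\|\imath_{s,p,\bsgamma}\|\,e(Q_{s,n};H_{s,p,\bsgamma}).
\]
Taking the supremum over all such $f$ yields the first inequality. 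The second inequality is entirely symmetric: fix $h\in H_{s,p,\bsgamma}$ with $\|h\|_{H_{s,p,\bsgamma}}\le 1$, use boundedness of $\imath_{s,p,\bsgamma}^{-1}$ to get $\|h\|_{F_{s,p,\bsgamma}}\le\|\imath_{s,p,\bsgamma}^{-1}\|$, normalize, and take the supremum over the unit ball of $H_{s,p,\bsgamma}$.

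There is no real obstacle here; the statement is a standard consequence of the fact that composing a fixed linear functional with an embedding multiplies its norm by at most the embedding norm. The only point worth a sentence of care is the remark above that $\calI_s-Q_{s,n}$ genuinely is one and the same functional on both spaces (so that the point evaluations appearing in $Q_{s,n}$ and the integral $\calI_s$ agree), which is exactly why the equality of $F_{s,p,\bsgamma}$ and $H_{s,p,\bsgamma}$ as sets of functions for product weights, recalled just before Theorem~\ref{thm:p2}, is being invoked.
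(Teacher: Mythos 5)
Your argument is correct and is exactly the standard reasoning the paper has in mind: the corollary is stated without proof precisely because $e(Q_{s,n};\cdot)$ is the norm of the fixed functional $\calI_s-Q_{s,n}$, and precomposing with the embedding (or its inverse) multiplies that norm by at most the embedding norm. Your added care about the two spaces coinciding as sets so that the functional is genuinely the same is a reasonable point and consistent with the paper's setup for product weights.
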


The essence of this corollary is that, for small
$\max(\|\imath_{s,p,\bsgamma}\|,\|\imath_{s,p,\bsgamma}^{-1}\|)$,
 an algorithm with small
worst case error with respect to one space has also small
worst case error with respect to the other space. In particular,
a good truncation in the space $F_{s,p,\bsgamma}$ leads to
efficient algorithms for $H_{s,p,\bsgamma}$.

We end this section with the following remark.

\begin{remark}
If
\[
  \sum_{j=1}^\infty \gamma_j\,<\,\infty,
\]
then the norms of the embedding operators are bounded
independently of $s$.
\end{remark}

\section{CBC Construction of Folded Lattice Rules for Integration in $F_{s,2,\bsgamma}$ and $H_{s,2,\bsgamma}$}\label{seccbc}

Now we consider folded (also called tent transformed) lattice rules.
Throughout this section we only consider product weights
and $p=2$. Note that then $p^*=2$ and for the exponent in
Proposition~\ref{proptrunc}
\[
   \frac{3}{2\,(p^*+1)}\,=\,\frac12.
\]

For $n \in \mathbb{N}$ and $\bsz \in \mathbb{Z}^s$ a lattice
rule with $n$ points and generating vector $\bsz$ is a quadrature
rule of the form
\begin{equation}\label{def_LR}
A_{n,s}(\bsz)(f)=\frac{1}{n}\sum_{k=0}^{n-1}f\left(\left\{
\frac{k}{n} \bsz \right\}  \right),
\end{equation}
where the fractional part $\{(k/n)\bsz\}$ of $ $ is meant
component-wise. Lattice rules are especially suited for the integration of
1-periodic, smooth functions (e.g. from Korobov spaces), for which
there exist excellent error estimates~\cite{DKS,LP14,niesiam,slojoe}.
These results can also be
transferred to non-periodic functions when one replaces lattice
rules by  folded (or tent transformed) lattice rules. The tent
transform $\phi:[0,1] \rightarrow [0,1]$ is a Lebesgue measure
preserving map given by $\phi(x) = 1 -
|1-2x|$. For a vector $\bsx \in [0,1]^s$ let $\phi(\bsx)$ be
defined component-wise. Then the folded version of \eqref{def_LR}
is given by
\begin{equation}\label{def_LR_fold}
A_{n,s}^{\phi}(\bsz)(f)=\frac{1}{n}\sum_{k=0}^{n-1}f\left(\phi\left(\left\{
\frac{k}{n} \bsz \right\} \right) \right).
\end{equation}
The idea of using folded lattice rules was first introduced by
Hickernell in \cite{H02}.

For the worst case error of a folded lattice rule in the
unanchored Sobolev space $H_{s,2,\bsgamma}$ it follows from
\cite[Lemma~1 and lines 11--13 on page 277]{DNP14} that
\begin{equation}\label{eq_wce_tent}
e(A_{n,s}^{\phi}(\bsz);H_{s,2,\bsgamma}) \le e(A_{n,s}(\bsz);H^{\rm
Kor}_{s,2,\pi^{-2} \bsgamma}),
\end{equation}
where $\pi^{-2} \bsgamma = (\pi^{-2|\setu|} \gamma_{\setu})_{\setu
\subseteq \sset}$. For $\alpha > 1$ the Korobov space $H^{\rm
Kor}_{s,\alpha,\bsgamma}$ is a reproducing kernel Hilbert space of
1-periodic functions with kernel function
\[
  K_{s,\alpha,\bsgamma}(\bsx,\bsy)=\sum_{\bsh \in \mathbb{Z}^s}
r(\bsh) \exp(2 \pi \mathtt{i} \bsh \cdot (\bsx-\bsy)).
\]
 Here, for $\bsh=(h_1,h_2,\ldots,h_s) \in \mathbb{Z}^s$,
$r(\bsh)=\prod_{j=1}^s r_j(h_j)$, and for $h\in \mathbb{Z}$ we put
\[
r_j(h)=\left\{
\begin{array}{ll}
1 & \mbox{ if } h=0,\\
\frac{\gamma_j^2}{|h|^{\alpha}} & \mbox{ if } h \not=0.
\end{array}\right.
\]

Hence the worst case error of a lattice rule in
$H^{{\rm Kor}}_{s,2,\bsgamma}$
dominates the worst case error of the folded version of the same
lattice rule in $H_{s,2,\pi^2 \bsgamma}$ (whose elements are not
necessarily 1-periodic).

There are a lot of results concerning the worst case error of
lattice rules for Korobov spaces. Excellent generating vectors can
be constructed component-wise with so-called component by
component (or, for short, CBC) algorithms.
The CBC approach goes back to Korobov~\cite{kor} in the 1960s.
Later it was re-invented
by Sloan and Reztsov~\cite{SR} in 2002 and became a powerful tool
in constructing lattice rules for high-dimensional
problems. We refer to \cite{dick04,kuo,SR,SKJ} for the
CBC construction and \cite{NC06,CoolNuy} for the fast CBC construction
according to Cools and Nuyens.

For example, for product weights we have the following result
which is essentially \cite[Theorem~5.12]{DKS}.

\begin{theorem}\label{thmCBClatkor}
Let $n$ be a prime number and consider product weights
$\boldsymbol{\xi}=\prod_{j\in\setu}\xi_j$. One can
construct with a fast
CBC algorithm a lattice point $\bsz \in \{0,1,\ldots,n-1\}^s$ such
that
\begin{equation}\label{cbc_wc_err}
e(A_{n,s}(\bsz);H^{\rm Kor}_{s,\alpha,\boldsymbol{\xi}})\le
\frac{1}{(n-1)^{1/(2\lambda)}}\left(-1+\prod_{j=1}^s (1+2
\xi_j^{2\lambda} \zeta(\lambda \alpha))\right)^{1/(2\lambda)}
\end{equation}
for all $\lambda \in (1/\alpha,1]$, where $\zeta$ denotes the
Riemann zeta function, $\zeta(\alpha)=\sum_{j\ge 1} j^{-\alpha}$.
The construction cost of the
fast CBC algorithm is of order of magnitude $O(s n \log n)$.
\end{theorem}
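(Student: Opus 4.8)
The plan is to reduce the statement to the known CBC bound for Korobov spaces and then optimize the constant carefully. The theorem asserts a worst-case error bound of the form $(n-1)^{-1/(2\lambda)}$ times a product, obtainable by a fast CBC construction at cost $O(sn\log n)$. This is, as the statement itself indicates, essentially Theorem~5.12 from \cite{DKS}, so the bulk of the work is already done elsewhere; what remains is to present the argument in the form stated here.

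First I would recall the standard CBC error criterion. For a prime $n$ and the Korobov space $H^{\rm Kor}_{s,\alpha,\boldsymbol{\xi}}$ with product weights, the squared worst-case error of the lattice rule with generating vector $\bsz$ has the explicit shift-invariant form
\[
  [e(A_{n,s}(\bsz);H^{\rm Kor}_{s,\alpha,\boldsymbol{\xi}})]^2
  \,=\,-1+\frac1n\sum_{k=0}^{n-1}\prod_{j=1}^s\left(1+\xi_j^2\sum_{h\neq 0}\frac{\re^{2\pi\ri h k z_j/n}}{|h|^\alpha}\right).
\]
The CBC algorithm chooses $z_1,z_2,\ldots,z_s$ one coordinate at a time, at each step picking $z_j\in\{1,\ldots,n-1\}$ to minimize this quantity (with the first $j$ coordinates active). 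The main step is the averaging argument: since the average over $z_j$ of the relevant partial quantity is controlled, a greedy choice does at least as well as the average, and an induction over $j$ then yields, for every $\lambda\in(1/\alpha,1]$, the bound on the $2\lambda$-th power of the error,
\[
  [e(A_{n,s}(\bsz);H^{\rm Kor}_{s,\alpha,\boldsymbol{\xi}})]^{2\lambda}
  \,\le\,\frac1{n-1}\left(-1+\prod_{j=1}^s\bigl(1+2\,\xi_j^{2\lambda}\,\zeta(\lambda\alpha)\bigr)\right),
\]
where the factor $2\zeta(\lambda\alpha)$ arises from $\sum_{h\neq 0}|h|^{-\lambda\alpha}=2\zeta(\lambda\alpha)$ and Jensen's inequality is used to pass from the $L_1$-type average to the $2\lambda$-th power. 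Taking $2\lambda$-th roots gives exactly \eqref{cbc_wc_err}.

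Finally, for the cost claim, I would invoke the Cools--Nuyens fast CBC implementation \cite{NC06,CoolNuy}: because $n$ is prime, the group $\mathbb{Z}_n^\times$ is cyclic, so at each of the $s$ steps the vector of candidate values of the error criterion over all choices of $z_j$ can be computed as a single circular convolution of length $n-1$, done in $O(n\log n)$ operations via the FFT; summing over the $s$ coordinates gives total cost $O(sn\log n)$.

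\textbf{Main obstacle.} The delicate point is not the structure of the argument, which is classical, but getting the constant exactly right: one must be careful that the averaging over $z_j\in\{1,\ldots,n-1\}$ (not $\{0,\ldots,n-1\}$) produces the denominator $n-1$ rather than $n$, and that the application of Jensen's inequality with exponent $\lambda\le 1$ is valid precisely on the range $\lambda\in(1/\alpha,1]$ so that $\zeta(\lambda\alpha)$ converges. Since the result is quoted verbatim from \cite[Theorem~5.12]{DKS}, in the paper itself one may simply cite that reference; the sketch above indicates how the proof goes should a self-contained argument be wanted.
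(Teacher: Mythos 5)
Your proposal is correct and matches the paper's treatment: the paper offers no proof of this theorem, simply attributing it to \cite[Theorem~5.12]{DKS}, and your sketch of the underlying CBC averaging argument (shift-invariant error formula, induction with the mean-value/Jensen step giving the $(n-1)^{-1}$ factor and the $2\zeta(\lambda\alpha)$ constant for $\lambda\in(1/\alpha,1]$, and the Cools--Nuyens FFT implementation for the $O(sn\log n)$ cost) is the standard and correct route to that cited result.
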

>From Theorem~\ref{thmCBClatkor} in conjunction with
\eqref{eq_wce_tent} and Corollary~\ref{coremb} we obtain the following result.
\begin{theorem}\label{thmcbcresult}
Let $s\in \mathbb{N}$ be given and let $n$ be a prime number. If $\bsz \in
\{0,1,\ldots,n-1\}^s$ is constructed such that \eqref{cbc_wc_err}
holds, then for all $\lambda \in (1/2,1]$ we have
\[
  e(A_{n,s}^{\phi}(\bsz);H_{s,2,\bsgamma}) \le \frac{1}{(n-1)^{1/(2\lambda)}}
\left(-1+\prod_{j=1}^s \left(1+2 \left(\frac{\gamma_j}{\pi^2}
\right)^{2 \lambda} \zeta(2\lambda)\right)\right)^{1/(2\lambda)}
\]
 and
\[
  e(A_{n,s}^{\phi}(\bsz);F_{s,2,\bsgamma}) \le \|\imath_{s,2,\bsgamma}\|
\frac{1}{(n-1)^{1/(2\lambda)}}\left(-1+\prod_{j=1}^s (1+2
\left(\frac{\gamma_j}{\pi^2}\right)^{2 \lambda}
\zeta(2\lambda))\right)^{1/(2\lambda)}.
\]
\end{theorem}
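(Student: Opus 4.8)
The plan is to chain together three results already established in the excerpt. First I would invoke Theorem~\ref{thmCBClatkor} with the particular choice of product weights $\xi_j = \pi^{-2}\gamma_j$ and exponent $\alpha = 2$, which is legitimate since $\lambda \in (1/2,1] = (1/\alpha,1]$ for $\alpha=2$; this yields a generating vector $\bsz$ satisfying \eqref{cbc_wc_err}, i.e.
\[
e(A_{n,s}(\bsz);H^{\rm Kor}_{s,2,\pi^{-2}\bsgamma})\le \frac{1}{(n-1)^{1/(2\lambda)}}\left(-1+\prod_{j=1}^s \left(1+2\left(\frac{\gamma_j}{\pi^2}\right)^{2\lambda}\zeta(2\lambda)\right)\right)^{1/(2\lambda)},
\]
using $\zeta(\lambda\alpha) = \zeta(2\lambda)$. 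Next I would apply the transference bound \eqref{eq_wce_tent}, which says $e(A_{n,s}^{\phi}(\bsz);H_{s,2,\bsgamma}) \le e(A_{n,s}(\bsz);H^{\rm Kor}_{s,2,\pi^{-2}\bsgamma})$, to conclude the first displayed inequality of the theorem directly.

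For the second inequality, I would use Corollary~\ref{coremb} with $Q_{s,n} = A_{n,s}^{\phi}(\bsz)$: it gives $e(A_{n,s}^{\phi}(\bsz);F_{s,2,\bsgamma}) \le \|\imath_{s,2,\bsgamma}\|\, e(A_{n,s}^{\phi}(\bsz);H_{s,2,\bsgamma})$, and then I substitute the bound on $e(A_{n,s}^{\phi}(\bsz);H_{s,2,\bsgamma})$ just obtained. This is entirely mechanical once the first part is in place.

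The only point requiring a small amount of care is the identification of the weight data in the Korobov space across the various results: Theorem~\ref{thmCBClatkor} is stated for an abstract product weight sequence $(\xi_j)$, while \eqref{eq_wce_tent} involves the space $H^{\rm Kor}_{s,2,\pi^{-2}\bsgamma}$ whose weights are $\pi^{-2|\setu|}\gamma_\setu = \prod_{j\in\setu}(\pi^{-2}\gamma_j)$ — so one must observe that these are precisely the product weights with $\xi_j = \pi^{-2}\gamma_j$, and that the $r_j(h)$ in the kernel use the \emph{square} $\xi_j^2 = \gamma_j^2/\pi^4$, consistent with the exponent $2\lambda$ on $\gamma_j/\pi^2$ appearing in the final bound. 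There is no genuine obstacle here; the proof is a short composition of cited statements, and I would simply write out the three-line chain with the substitution $\xi_j = \gamma_j/\pi^2$, $\alpha = 2$ made explicit.

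\begin{proof}
Fix $\lambda \in (1/2,1]$. Applying Theorem~\ref{thmCBClatkor} with $\alpha = 2$ and the product weights given by $\xi_j = \gamma_j/\pi^2$ (so that $\xi_\setu = \prod_{j\in\setu}(\gamma_j/\pi^2)$), we obtain a lattice point $\bsz \in \{0,1,\ldots,n-1\}^s$ with
\[
e(A_{n,s}(\bsz);H^{\rm Kor}_{s,2,\pi^{-2}\bsgamma}) \le \frac{1}{(n-1)^{1/(2\lambda)}}\left(-1+\prod_{j=1}^s \left(1+2\left(\frac{\gamma_j}{\pi^2}\right)^{2\lambda}\zeta(2\lambda)\right)\right)^{1/(2\lambda)},
\]
where we used $\zeta(\lambda\alpha) = \zeta(2\lambda)$. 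Combining this with \eqref{eq_wce_tent} yields the first asserted bound:
\[
e(A_{n,s}^{\phi}(\bsz);H_{s,2,\bsgamma}) \le e(A_{n,s}(\bsz);H^{\rm Kor}_{s,2,\pi^{-2}\bsgamma}) \le \frac{1}{(n-1)^{1/(2\lambda)}}\left(-1+\prod_{j=1}^s \left(1+2\left(\frac{\gamma_j}{\pi^2}\right)^{2\lambda}\zeta(2\lambda)\right)\right)^{1/(2\lambda)}.
\]
For the second bound, apply Corollary~\ref{coremb} with $Q_{s,n} = A_{n,s}^{\phi}(\bsz)$ to get $e(A_{n,s}^{\phi}(\bsz);F_{s,2,\bsgamma}) \le \|\imath_{s,2,\bsgamma}\|\, e(A_{n,s}^{\phi}(\bsz);H_{s,2,\bsgamma})$, and substitute the inequality just established. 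This completes the proof.
\end{proof}
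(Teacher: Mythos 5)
Your proof is correct and follows exactly the route the paper intends: the paper derives Theorem~\ref{thmcbcresult} precisely by combining Theorem~\ref{thmCBClatkor} (with $\alpha=2$, $\xi_j=\gamma_j/\pi^2$), the folding inequality \eqref{eq_wce_tent}, and Corollary~\ref{coremb}, giving no further details. Your explicit identification of the weight substitution and the check that $\lambda\in(1/2,1]=(1/\alpha,1]$ is a welcome elaboration of what the paper leaves implicit.
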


Hence we have a fast CBC construction of quadrature rules for the
unanchored space $H_{s,2,\bsgamma}$ and for the anchored space
$F_{s,2,\bsgamma}$.

\section{Truncated Quadrature Rule for
$H_{s,2,\bsgamma}$ and  $F_{s,2,\bsgamma}$
Based on Folded Lattice Rules}\label{sectrunc}

Now we combine Theorem \ref{thmcbcresult} with the truncation in
the sense of $F_{s,2,\bsgamma}$. For the $k$-dimensional
quadrature rules we use folded lattice rules satisfying
\eqref{cbc_wc_err} with $s$ replaced by $k$. We deduce the
following theorem from Corollary \ref{cortrunc}, Corollary
\ref{coremb}, and Theorem \ref{thmcbcresult}.

\begin{theorem}
Let $s\in\bbN$ be given and let $n$ be a prime number. Let $k\in\bbN$
be chosen such that $k\le s$. If $\bsz\in\{0,1,\ldots,n-1\}^k$ is
constructed such that
\eqref{cbc_wc_err} with $s$ replaced by $k$ holds, then for all
$\lambda\in (1/2,1]$ we have
\begin{eqnarray}\label{thmtruncF}
e(Q^{\rm trnc}_{s,n,k};F_{s,2,\bsgamma})&\le&
  \left[\frac{\|\imath_{k,2,\bsgamma}\|^2}{(n-1)^{1/\lambda}}
\left(-1+\prod_{j=1}^k \left(1+2 \left(\frac{\gamma_j}{\pi^2}
  \right)^{2\, \lambda}
\zeta(2\lambda)\right)\right)^{1/\lambda}\right.\nonumber\\
&&\left. + \|\calI_s\|^2\, \left(1-\exp\left(-\frac{1}2\,\sum_{j=k+1}^s
\gamma_j^{2}\right)\right)
\right]^{1/2}.
\end{eqnarray}
and
\begin{eqnarray}\label{thmtruncH}
e(Q^{\rm trnc}_{s,n,k};H_{s,2,\bsgamma}) & \le &
\|\imath_{s,2,\bsgamma}^{-1}\|
\left[\frac{\|\imath_{k,2,\bsgamma}\|^2}{(n-1)^{1/\lambda}}
\left(-1+\prod_{j=1}^k \left(1+2 \left(\frac{\gamma_j}{\pi^2}\right)^{2
      \lambda}
\zeta(2\lambda )\right)\right)^{1/\lambda}\right.\nonumber\\
&&\hspace{1.3cm}+\|\calI_s\|^2\,
\left.\left(1-\exp\left(-\frac{1}2\,\sum_{j=k+1}^s
\gamma_j^{2}\right)\right) \right]^{1/2}.
\end{eqnarray}
\end{theorem}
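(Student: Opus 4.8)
The plan is to derive both displayed bounds by feeding the $k$-dimensional folded lattice rule into the truncation estimate of Corollary \ref{cortrunc} and then passing between the anchored and ANOVA norms via Corollary \ref{coremb}; no genuinely new argument is needed, only careful tracking of the dimension index.

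First I would handle the anchored bound \eqref{thmtruncF}. Since $p=2$ we have $p^*=2$ and $\frac{3}{2(p^*+1)}=\frac12$, so Corollary \ref{cortrunc} applied to $Q^{\rm trnc}_{s,n,k}$ built on an arbitrary $k$-dimensional rule $A_{k,n}$ states
\[
e(Q^{\rm trnc}_{s,n,k};F_{s,2,\bsgamma})\,\le\,\left([e(A_{k,n};F_{k,2,\bsgamma})]^{2}+\|\calI_s\|^{2}\left(1-\exp\left(-\tfrac12\sum_{j=k+1}^s\gamma_j^{2}\right)\right)\right)^{1/2}.
\]
I would take $A_{k,n}:=A_{n,k}^{\phi}(\bsz)$, the folded lattice rule whose generating vector is the one furnished by the fast CBC algorithm, i.e.\ the $\bsz$ satisfying \eqref{cbc_wc_err} with $s$ replaced by $k$ --- exactly the hypothesis of the theorem. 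Then Theorem \ref{thmcbcresult} read in dimension $k$ gives, for every $\lambda\in(1/2,1]$,
\[
e(A_{n,k}^{\phi}(\bsz);F_{k,2,\bsgamma})\,\le\,\|\imath_{k,2,\bsgamma}\|\,\frac{1}{(n-1)^{1/(2\lambda)}}\left(-1+\prod_{j=1}^k\left(1+2\left(\frac{\gamma_j}{\pi^2}\right)^{2\lambda}\zeta(2\lambda)\right)\right)^{1/(2\lambda)}.
\]
Squaring this (so $1/(2\lambda)\mapsto 1/\lambda$ and $(n-1)^{-1/(2\lambda)}\mapsto(n-1)^{-1/\lambda}$) and substituting into the previous display produces \eqref{thmtruncF} verbatim.

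For the ANOVA bound \eqref{thmtruncH} I would apply the second inequality of Corollary \ref{coremb} to the single quadrature $Q_{s,n}=Q^{\rm trnc}_{s,n,k}$, namely
\[
e(Q^{\rm trnc}_{s,n,k};H_{s,2,\bsgamma})\,\le\,\|\imath_{s,2,\bsgamma}^{-1}\|\,e(Q^{\rm trnc}_{s,n,k};F_{s,2,\bsgamma}),
\]
and then insert the bound \eqref{thmtruncF} just obtained; this is exactly \eqref{thmtruncH}.

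The only points needing care --- the closest thing to an obstacle, though it is really just verification --- are that $A_{n,k}^{\phi}(\bsz)$ does qualify as an admissible ``$A_{k,n}$'' in Corollary \ref{cortrunc} (it is a linear quadrature on $[0,1]^k$) and that the CBC guarantee \eqref{cbc_wc_err} in dimension $k$ indeed yields the $F_{k,2,\bsgamma}$-error bound used above; the latter goes through \eqref{eq_wce_tent} and Corollary \ref{coremb} precisely as in the proof of Theorem \ref{thmcbcresult}. Everything else is mechanical substitution.
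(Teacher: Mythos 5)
Your proposal is correct and follows exactly the route the paper itself indicates: instantiate Corollary \ref{cortrunc} with $p=p^*=2$ using the CBC-constructed folded lattice rule as the $k$-dimensional rule, bound its $F_{k,2,\bsgamma}$ worst case error via Theorem \ref{thmcbcresult} in dimension $k$, and pass to the ANOVA space with Corollary \ref{coremb}. The paper gives no further detail than this chain of deductions, so your write-up matches its proof in both substance and structure.
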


\begin{remark}\rm
 Note that the truncated quadrature rule $Q^{\rm trnc}_{s,n,k}$ in
 \eqref{thmtruncF} and \eqref{thmtruncH}, respectively,
can be constructed using $O(kn\log n)$ (as opposed to $O(sn\log n)$)
operations.
\end{remark}

Let us now discuss the bounds in \eqref{thmtruncF} and
\eqref{thmtruncH}. We assume that $s$ is huge and that the product weights
$\bsgamma$ satisfy the condition
\begin{equation}\label{eqgnonsqared}
 \sum_{j=1}^\infty \gamma_j <\infty.
\end{equation}
Furthermore, recall that $\gamma_j \le 1$ for all $j \in \mathbb{N}$.
From this 
and, by Theorem \ref{thm:p2},  and standard arguments, we get
\begin{eqnarray*}
\|\imath_{s,2,\bsgamma}\|^2 = \|\imath_{s,2,\bsgamma}^{-1}\|^2
& \le &\prod_{j=1}^s \left(1+\gamma_j \left(\frac1{\sqrt{3}}
   +\frac1{6}+\frac{1}{24 \sqrt{3}}\right)\right)\\
&\le & \prod_{j=1}^s \left(1+0.7681\cdot\gamma_j\right)\\
&\le &\exp \left(0.7681\cdot\sum_{j=1}^\infty\gamma_j\right)
\,=:\,C_1 (\bsgamma).
\end{eqnarray*}

Similarly, we see that
\[
\prod_{j=1}^k \left(1+2 \left(\frac{\gamma_j}{\pi^2}\right)^{2 \lambda}
\zeta(2\lambda)\right)\le
\exp\left(\frac{2}{\pi^{4\,\lambda}}\,\zeta (2\lambda) \sum_{j=1}^\infty
  \gamma_j^{2\,\lambda}\right)\,=:\, C_2 (\bsgamma,\lambda),
\]
and
\[
  \|\mathcal{I}_s\|^2\, =\, \prod_{j=1}^s \left(1+
\frac{\gamma_j^2}{3}\right) \le \exp\left(\frac{1}{3}\,
\sum_{j=1}^\infty\gamma_j^2\right) \,\le\, C_1 (\bsgamma).
\]
In summary, we obtain from \eqref{thmtruncF} that
\begin{equation}\label{intermediateF}
[e(Q^{\rm trnc}_{s,n,k};F_{s,2,\bsgamma})]^2\, \le \,
\frac{C_1(\bsgamma) C_2^{1/\lambda}
  (\bsgamma,\lambda)}{(n-1)^{1/\lambda}}
+ C_1(\bsgamma)
\left(1-\exp\left(-\frac{1}2\,\sum_{j=k+1}^s\gamma_j^{2}\right)\right),
\end{equation}
and from \eqref{thmtruncH} that
\begin{eqnarray*}
[e(Q^{\rm trnc}_{s,n,k};H_{s,2,\bsgamma})]^2 &\le&
\frac{C_1^2 (\bsgamma)\, C_2^{1/\lambda}(\bsgamma,\lambda)}{(n-1)^{1/\lambda}}
+C_1^2(\bsgamma)
\left(1-\exp\left(-\frac{1}2\,\sum_{j=k+1}^s\gamma_j^{2}\right)\right)\\
& \le & \frac{C_4(\bsgamma,\lambda)}{n^{1/\lambda}}+C_1^2(\bsgamma)
\left(1-\exp\left(-\frac{1}2\,\sum_{j=k+1}^s\gamma_j^{2}\right)\right),
\end{eqnarray*}
where
\[
C_4(\bsgamma,\lambda)\,=\,2^{1/\lambda}\,C_1^2(\bsgamma)\,
C_2^{1/\lambda} (\bsgamma,\lambda).
\]

Let now $G:\mathbb{R}^+ \rightarrow \mathbb{R}^+$ be a strictly
decreasing bijective
function with $\lim_{x \rightarrow \infty}G(x)=0$ such that
\[
  G(k) \ge \sum_{j=k+1}^{\infty}\gamma_j^2 \ \ \ \ \mbox{ for $k \in
  \mathbb{N}$.}
\]
Note that $G$ exists due to the assumption in \eqref{eqgnonsqared}
and also $G^{-1}$ exists and is strictly decreasing as well. Then we
obtain from \eqref{intermediateF} that
\[
   [e(Q^{\rm trnc}_{s,n,k};F_{s,2,\bsgamma})]^2 \le C_1(\bsgamma)\,
\left(\left(\frac{2\,C_2(\bsgamma,\lambda)}{n}\right)^{1/\lambda}
   +
\left(1-{\rm e}^{-G(k)/2}\right)\right).
\]
Now we choose $k$ such that
\[
\left(\frac{2\, C_2(\bsgamma,\lambda)}{n}\right)^{1/\lambda}
  \, \asymp\, 1-{\rm e}^{-G(k)/2}.
\]
For $n>2\,C_2(\bsgamma,\lambda)$, this is satisfied if
\[
G(k)\,\asymp\, - 2\,\log\left(1-
\left(\frac{2\,C_2(\bsgamma,\lambda)}n
    \right)^{1/\lambda}\right)
  \,\asymp\,2 \left(\frac{2\,C_2(\bsgamma,\lambda)}{n}\right)^{1/\lambda}.
\]
Hence
\[
k \asymp G^{-1}\left(2\,(2\,C_2(\bsgamma,\lambda)/n)^{1/\lambda}\right).
\]
This means that for $\lambda \in (1/2,1]$  we obtain an error of
order of magnitude
$$
e(Q^{\rm trnc}_{s,n,k};F_{s,2,\bsgamma}) \ll_{\bsgamma,\lambda}
\frac{1}{n^{1/(2 \lambda)}}
$$
under a construction cost of order of magnitude
\[
O\left(n \, G^{-1}((2\,(2\,C_2(\bsgamma,\lambda)/n))^{-1/\lambda})\,  \log n\right)
\]
for $s$ arbitrarily large. A similar assertion holds for
$e(Q^{\rm trnc}_{s,n,k};H_{s,2,\bsgamma})$.

We end this section with the following examples.
\begin{example}\rm
Assume that $\gamma_j=j^{-a}$ with $a >1$. Then we have
$$
  \sum_{j=k+1}^s\gamma_j^2\,\le\, \int_k^{\infty} \frac{1}{t^{2 a}} \rd t
\,=\,\frac{1}{2a -1} \frac{1}{k^{2a-1}}.
$$
Hence we choose $$G(x)=\frac{1}{2a-1} \frac{1}{x^{2 a-1}}$$ and
therefore
$$
  G^{-1}(x)=\left(\frac{1}{2 a-1} \frac{1}{x}\right)^{1/(2a-1)}.
$$
This means that for $\lambda \in (1/2,1]$  we obtain an error of
order of magnitude
$$
  e(Q^{\rm trnc}_{s,n,k};F_{s,2,\bsgamma}) \ll_{\bsgamma,\lambda}
\frac{1}{n^{1/(2 \lambda)}}
$$
under a construction cost of order of magnitude
$$
  O\left(n^{1+\frac{1}{\lambda (2 a-1)}} \log n\right)
$$
for $s$ arbitrarily large. The same assertion holds for
$e(Q^{\rm trnc}_{s,n,k};H_{s,2,\bsgamma})$.
\end{example}

\begin{example}\rm
Assume now that $\gamma_j=q^j$ for $q\in(0,1)$. Then we can take
\[
   G(k)\,=\frac{q^{k+1}}{1-q}
\]
and
\[
  G^{-1}(x)\,=\,\frac{\log(1/(x\,(1-q)))}{\log(1/q)}.
\]
This means that for $\lambda \in (1/2,1]$  we obtain an error of
order of magnitude
$$
  e(Q^{\rm trnc}_{s,n,k};F_{s,2,\bsgamma}) \ll_{\bsgamma,\lambda}
\frac{1}{n^{1/(2 \lambda)}}
$$
under a construction cost of order of magnitude
\[
  O\left(n\,\log^2(n)\right)
\]
for $s$ arbitrarily large. The same assertion holds for
$e(Q^{\rm trnc}_{s,n,k};H_{s,2,\bsgamma})$.
\end{example}

\section{Generalizations}
For simplicity of discussion, we presented so far the results for the domain
$D=[0,1]$, the standard $L_p$ norm
\[
  \|g\|_{L_p}\,=\,\left(\int_0^1|g(x)|^p\rd x\right)^{1/p},
\]
and the {\em un-weighted} integration problem of approximating
$\calI_s$.

However, the results of \cite{HeRiWa15,SH} on the equivalence of
anchored and ANOVA spaces hold for more general domains and
norms, as shown in \cite{GnHeHiRiWa15}. Our results Theorem
\ref{thm-main}, Proposition \ref{proptrunc}, Corollaries
\ref{cortrunc} and \ref{cor-nrm}, and Theorem \ref{thm:p2} can
easily be extended to this more general setting.

More specifically let $D$ be an interval
\[
  D\,=\,[0,T]\quad \mbox{or}\quad D\,=\,[0,\infty),
\]
and let
\[
  \psi:D\to\bbR_+
\]
be a positive (a.e.) probability density function. The authors of
\cite{GnHeHiRiWa15} provide a necessary and sufficient condition on
$p\in[1,\infty]$ and $\psi$ so that $F_{s,p,\bsgamma}$ and
$H_{s,p,\bsgamma}$ are well defined Banach spaces when endowed with the
norms
\[
  \|f\|_{F_{s,p,\bsgamma}}\,=\,\left(\sum_{\setu\in\setU}\gamma_\setu^{-p}\,
    \int_{D^{|\setu|}}
    \left|f^{(\setu)}([\bsx_\setu;\bszero_{-\setu}])\right|^p
   \,\psi_{\setu}(\bsx_\setu)\rd\bsx_\setu\right)^{1/p},
\]
and
\[
  \|f\|_{H_{s,p,\bsgamma}}\,=\,
  \left(\sum_{\setu\in\setU}\gamma_\setu^{-p}\,\int_{D^{|\setu|}}
   \left|\int_{D^{s-|\setu|}}f^{(\setu)}([\bsx_\setu;\bsx_{-\setu}])\,\psi_{-|\setu|}
   (\bsx_{-\setu})\rd\bsx_{-\setu}\right|^p\psi_\setu(\bsx_\setu)\rd
   \bsx_\setu\right)^{1/p}
\]
respectively. Here
\[
  \psi_\setu(\bsx_\setu)\,=\,\prod_{j\in\setu}\psi(x_j).
\]

For product weights they show that
\[
  \|\imath_{s,1,\bsgamma}\|\,=\,\|\imath_{s,1,\bsgamma}^{-1}\|\,=\,
   \prod_{j\in\setu}(1+\gamma_j\,\kappa_\psi)\quad\mbox{and}\quad
 \|\imath_{s,\infty,\bsgamma}\|\,=\,\|\imath_{s,\infty,\bsgamma}^{-1}\|\,=\,
   \prod_{j\in\setu}(1+\gamma_j\,m_\psi),
\]
where
\[
    m_\psi\,=\,\int_D x\,\psi(x)\rd x\quad\mbox{and}\quad
    \kappa_\psi\,=\,\esup_{x\in D} \frac{\int_D  (t-x)^0_+\psi(t)\rd t}{\psi(x)},
\]
and the upper bounds for $p \in (1,\infty)$ obtained via interpolation theory, see \cite{GnHeHiRiWa15} for more.

Let
\[
   \rho:D\to\bbR_+
\]
be a probability density function and let
\[
  \rho_s:D^s\to\bbR_+\quad\mbox{be defined by}\quad
\rho_s(\bsx)\,=\,\prod_{j=1}^s \rho(x_j).
\]
Consider now the integration problem of approximating
\[
  \calI_{s,\rho}(f)\,=\,\int_{D^s} f(\bsx)\,\rho_s(\bsx)\rd\bsx
\]
for $f\in F_{s,p,\bsgamma}$ or $f\in H_{s,p,\bsgamma}$.

It is easy to verify that
\[
  \left|\calI_s(f_\setu)\right|\,\le\,\|f^{(\setu)}([\cdot_\setu;
  \bszero_{-\setu}])\|_{L_{p.\psi}(D^{\setu})}\,\|I_1\|^{|\setu|},
\]
where
\[
  \|I_1\|\,=\,\left(\int_D\psi^{-p^*/p}(t)\,\left|
   \int_D (x-t)^0_+\,\rho(x)\rd x\right|^{p^*}\right)^{1/p^*}
\]
Of course, for $p=1$, we have
\[
  \|I_1\|\,=\,\esup_{t\in D}\int_D(x-t)^0_+\,\rho(x)\rd x\,=\,1.
\]
Therefore
\[
  \|\calI_s\|\,=\,\left(\sum_{\setu\in\setU} \gamma_\setu^{p^*}\,
  \|I_1\|^{p^*\,|\setu|}\right)^{1/p^*}.
\]
Assuming that $\calI_s$ is continuous, i.e., $\|\calI_s\|<\infty$, the results of Section 3 hold
with
\[
   \frac1{p^*+1}\quad\mbox{replaced by}\quad\|I_1\|^{p^*}.
\]

Finally we add that similar positive results for the dimension truncation
can be obtained in other than the {\em worst case} settings and for
other than integration problems including function approximation.
These generalizations will be presented in our future papers.


\begin{small}
\noindent\textbf{Authors' addresses:}\\

\medskip

\noindent Peter Kritzer, Friedrich Pillichshammer\\
Institut f\"{u}r Finanzmathematik und Angewandte Zahlentheorie,
Johannes Kepler Universit\"{a}t Linz\\
Altenbergerstr.~69, 4040 Linz, Austria\\
E-mail: \texttt{peter.kritzer@jku.at},
\texttt{friedrich.pillichshammer@jku.at}.

\medskip

\noindent G. W. Wasilkowski\\
Computer Science Department, University of Kentucky\\
301 David Marksbury Building\\
 Lexington, KY 40506, USA\\
E-mail: \texttt{greg@cs.uky.edu}
\end{small}
\end{document}